\definecolor{dar}{rgb}{0.6,0.2,0.1}
\newcommand{\tr}{^{\prime}}
\def\ovl#1{\mbox{$\overline #1$}}
\def\b#1{\mbox{\boldmath $#1$}}    
\def\bl#1{\mbox{\footnotesize \boldmath {$#1$}}} 
\def\bt#1{\mbox{\boldmath $#1$}\tr}    
\def\cg#1{\ensuremath{\mathcal{#1}}}      
\def\cgl#1{\mbox{\scriptsize {${\cal #1}$}}}
\def\bs#1#2{\mbox{{#1}$\backslash${#2}}}
\def\bsl#1#2{\footnotesize\mbox{{#1}{$\backslash$}{#2}}}
\newcommand{\bksl}{\scriptsize\mbox{{$\backslash$}}}
\newcommand{\bks}{\mbox{$\backslash$}}
\newcommand{\diag}{{\rm diag}}    
\newtheorem{theorem}{Theorem}
\newtheorem{lemma}{Lemma}
\newtheorem{definition}{Definition}
\newtheorem{example}{Example}
\newtheorem{remark}{Remark}
\newcommand{\ci}{\mbox{\protect $\: \perp \hspace{-2.3ex}\perp$ }}
\begin{document}
\begin{frontmatter}
\title{A class of smooth models satisfying marginal and context specific conditional independencies}
%
\author[1]{R. Colombi}
\address[1]{ Statistical
Laboratory, University of Bergamo, Italy}
\author[2]{A. Forcina}
\address[2]{Dipartimento di Economia, Finanza e Statistica, University of Perugia, Italy}

\markboth{Colombi, Forcina}{Marginal log-linear parameters for
context specific conditional independencies}
\begin{abstract}
We study a class of conditional independence models for  discrete data with the property that one or more log-linear interactions are defined within two different marginal distributions and then constrained to 0; all the conditional independence models which are known to be non smooth belong to this class. We introduce a  new marginal log-linear parameterization and show that smoothness may be restored by restricting one or more independence statements to hold conditionally to a restricted subset of the configurations of the conditioning variables. Our results are based on a specific reconstruction algorithm from log-linear parameters to probabilities and fixed point theory. Several examples are examined and a general rule for determining the implied conditional independence restrictions is outlined.
\end{abstract}
\begin{keyword}
categorical data, marginal log-linear
parameterizations, smooth parameterizations.
\end{keyword}
\end{frontmatter}
\section{Introduction}

Conditional independence models for discrete data are determined by a set of constraints on log-linear interactions defined within different marginal distributions of a contingency table. The family of hierarchical and complete marginal log-linear parameterizations (HCMP for short) introduced by \citet{BerRud:02} provides a general framework for combining log-linear constraints defined on a collection of marginal distributions into an overall joint distribution. Methods for determining whether and how a conditional independence model may be translated into a HCMP have been studied by \citet{Rudas:10} and \citet{FoLuMa:10} among others; the fact that a HCMP exists, is a sufficient condition for the model to be smooth.

On the other hand, it is known that no HCMP exists when a model imposes constraints on the same log-linear interaction defined in two different marginals. It has been shown \cite[Theorem 3]{BerRud:02} that, when the same interaction is defined in two different marginals, the jacobian of the mapping from log-linear parameters to probabilities is singular for the uniform distribution. Though, formally, this does not imply that the model itself has singularities, all known models with singularities correspond to cases where no HCMP exists because one or more interactions are constrained more than once. In this paper we study the class of conditional independence models where the same interaction is constrained in two or more marginal distributions and we show, essentially, that any such model is non smooth but can be turned into a smooth model by restricting it to a suitable context specific conditional independence model.

Following \citet{BerRud:02}, we may assume, without loss of generality, that the marginal distributions of interest have been arranged in a non decreasing order and that they will be reconstructed one at a time starting from the smallest. Because the full joint distribution is simply the last marginal in this list, we need only to consider how to determine a given marginal distribution when one or more log-linear interactions to be constrained have already been defined and/or constrained in a previous marginal. A useful tool for reconstructing marginal distributions in a sequence is the mixed parameterization \citep[e.g.,][]{BarNi:79} by which we may  combine the marginal probabilities from previous marginals with the log-linear interactions defined in the marginal distribution under consideration. Because  the mapping produced by the mixed parameterization is one to one and smooth, the question of whether a model is smooth up to a given marginal, is equivalent to the question whether an algorithm based on the mixed parameterization exists and converges. By using results from the theory of fixed point algorithms, we study the jacobian of a new reconstruction algorithm that allows certain log-linear interactions to be redefined and show that this may either converge, and thus the model is smooth, remain at the starting point irrespective of the starting value, implying that the resulting distribution is not uniquely determined by the log-linear parameters or, simply not converge. A formal proof of these properties is derived under complete independence and we provide substantial evidence to support the conjecture that our results hold in the general case.

The results derived in this paper help clarifying which interaction parameters may be redefined and which other interactions should be omitted as a replacement. In particular we show that smoothness is restored only when a specific subset of other interactions is omitted;  these interactions have the property that, when they are missing, and thus unconstrained, the conditional independence of interest holds only on a subset of the configuration of the conditioning variables. Log-linear models which allow context specific conditional independences have been studied in detail by \citet{Hojs:2004} who also derives a markov property for undirected graphs involving context specific conditional independencies. A  special case of the results derived here was considered by \citet{RoLuLa}.

In section 2, we introduce the basic notations, define marginal log-linear interactions and review the properties of the mixed parameterization. In section 3, after presenting a set of motivating examples, we introduce a new algorithm for reconstructing a marginal distribution when interactions defined in previous marginals have to be constrained again and we analyze its convergence properties. In section 4 we study the consequences on the original conditional independence statements of omitting constraints on a specific subset of higher order interactions and show that this results in context specific restrictions.
\section{Notations and preliminary results}
We study the joint distribution of $d$ discrete random variables
where $X_j$, $j=1,\dots , d$, takes values in $(0, \dots,r_j)$. For
conciseness, we denote variables by their indices and use capitals
to denote non-empty subsets of $V$ = $\{1, \dots, d\}$; such subsets
will determine the variables involved either in a marginal
distribution or in an interaction term. The collection of all
non-empty subsets of a set $M \subseteq V$ will be denoted by $\cg
P(M)$.  In the following we write $i_1i_2 \dots i_k$ as a shorthand notation for $\{i_1,i_2 \dots ,i_k\}$.
For a given $M\subseteq V$, the marginal distribution in $M$ is
determined by the cell probabilities $p_M(\b x_M)$ = $P(X_j=x_j,
\forall j\in M)$. We  introduce a shorthand notation that allows
to specify the values of selected subsets of the arguments in a marginal probability and on the log-linear interactions to be defined below. Let $J \subset I\subset M$, then $p_M(\b x_J,\b
x_{\bsl I J},\b x_{\bsl M I})$ denotes the marginal probability where
$\b x_J$ is the value of $X_h,\:h\in J$, $\b x_{\bsl I J}$ the value of $X_h,\:h\in \bs I J$ and $\b x_{\bsl M I}$ the values of $X_h,\:h\in\bs M I$. We will also write $\b 0_{\bsl I J}$ to state that $X_h=0,\: \forall h\in \bs I J$.
\subsection{Marginal and conditional log-linear interactions}
Though there are many different ways of coding marginal log-linear
parameters, parameters defined by different codings are linearly
related; thus there is no loss of generality in using the
{\it reference category} coding, where comparisons are with respect to the category taken as reference, usually the first.
\begin{definition}
A reference category log-linear interaction $I$ within $M$ is defined by the
following expression
\begin{equation}
\eta_{I;M}(\b x_I\mid \b x_{\bsl M I})= \sum_{J\subseteq
I}(-1)^{\mid \bsl I J\mid} \log p_M(\b x_J,\b 0_{\bsl I J},\b
x_{\bsl M I}),\label{eq:mlli}
\end{equation}
where, $\forall  i\in I, x_i>0$.
\end{definition}
\begin{example}
The logit of $X_i$ at $x_i$ computed within $M$ is
$$
\eta_{i;M}(x_i\mid \b x_{\bsl M i})=\log p_M(x_i,\b x_{\bsl M i})-
\log p_M(0_i,\b x_{\bsl M i}),\quad x_i >0
$$
and the log-odds ratio for $X_i=x_i,\:X_j=x_j$ is
\begin{eqnarray*}
\eta_{H;M}(x_i,x_j\mid \b x_{\bsl M H})&=&\log p_M(x_i,x_j,\b
x_{\bsl M H})- \log p_M(0_i,x_j,\b x_{\bsl M H})\\
&-& \log p_M(x_i,0_j,\b x_{\bsl M H})+ \log p_M(0_i,0_j,\b x_{\bsl M
H})
\end{eqnarray*}
where $H=i\cup j$.
\end{example}
It may be easily verified that, given $h\in \bs M I$ and $H$ =
$I\cup h$, (\ref{eq:mlli}) implies the following recursive relation
\begin{equation}
\eta_{H;M}(\b x_H\mid \b x_{\bsl M H})= \eta_{I;M}(\b x_I\mid x_h,\b
x_{\bsl M H})- \eta_{I;M}(\b x_I\mid 0_h,\b x_{\bsl M H})
,\label{eq:recurs}
\end{equation}
this indicates that interactions of higher order may be constructed
by a sequence of first order differences starting from logits.

Whenever $\bs M I$ is not empty, marginal log-linear interactions
depend on the value of the remaining variables. Because
(\ref{eq:mlli}) is a contrast of logarithms of marginal
probabilities, it can be easily verified that $\eta_{I;M}(\b x_I\mid
\b x_{\bsl M I})$ is the log-linear interaction $I$ in the marginal
distribution $M$ conditionally on $X_h=x_h\:\forall h\in \bs M I$.
Clearly, within the full collection of marginal log-linear
interaction parameters conditional on the configurations of the
remaining variables, there is a substantial amount of redundancy.
Below we show that these parameters are linearly related and that
they can all be written in terms of the subset where the
conditioning variables are all fixed at their reference category; this
subset contains non redundant elements.

For a given $\eta_{I;M}(\b x_I\mid \b x_{\bsl M I})$ let $h\in \bs M
I$, then (\ref{eq:recurs}) may be used to obtain
$$
\eta_{I;M}(\b x_I\mid \b x_{\bsl M I})= \eta_{I;M}(\b x_I\mid
 0_h,\b x_{\bsl M H})+ \eta_{H;M}(\b x_I,x_h\mid \b x_{\bsl M H}).
$$
Repeated use of the relation above
leads to the following expansion
\begin{equation}
\eta_{I;M}(\b x_I\mid \b x_{\bsl M I})= \sum_{I\subseteq H\subseteq
M} \eta_{H;M}(\b x_H\mid \b 0_{\bsl M H}).\label{eq:linexp}
\end{equation}
The above equation shows that any marginal log-linear interaction may be written as a linear function of all possible higher order interactions conditional to the initial category of the remaining variables within the given marginal. For simplicity, in the following, we write $\eta_{I;M}(\b x_I)$ as a shorthand for $\eta_{I;M}(\b x_I\mid \b 0_{\bsl M I})$. An alternative way of removing conditioning variables, which has been applied to interactions defined as contrasts of averages of logarithms of probabilities, but could be applied to any type of interactions, is to average across the set of all possible configurations of the conditioning variables $\b x_{\bsl M I}$. The log linear interactions used by \citet{BerRud:02}, among others, are defined in this
way; Lemma \ref{BRCF} in the Appendix shows that these interactions are linear functions of all the interactions $\eta_{H;M}(\b x_J)$ for $H\supseteq I$.
\begin{example}
Suppose that $M$ = $I\cup h\cup k$, then
$$
\eta_{I;M}(\b x_I\mid x_h,x_k)=\eta_{I;M}(\b x_I)+\eta_{I\cup
h;M}(\b x_I,x_h)+\eta_{I\cup k;M}(\b x_i,x_k)+\eta_{I\cup h\cup
k;M}(\b x_I,x_h,x_k).
$$
\end{example}

For any $I\in \cg P(M)$, it is convenient to arrange the log-linear interactions $\eta_{I;M}(\b x_I)$ into the vector $\b \eta(I,M)$ with elements in lexicographic order of $\b x_I$; this vector may be written as
\begin{equation}
\b \eta(I,M) = \b C(I,M) \log \b p(M),
 \label{loglin}
\end{equation}
where $\b C(I,M)$ = $\bigotimes_{j=1}^d \b C_j$ and $\b C_j$ = $\left(-\b 1_{r_j}\: \b I_{r_j}\right)$ if $j\in I$ and
$\b C_j$ = $\left(1,\b 0_{r_j}\tr\right)$ otherwise.
Let also $\b\eta(M)$ = $\b C(M) \log \b p(M)$ denote the vector obtained by stacking the $\b \eta(I,M)$ components one below the other in lexicographic order relative to $I\in\cg P(M)$. It is well known that under multinomial sampling, $\b \eta(M)$ { constitutes} a vector of variation independent canonical parameters for $\b p(M)$. Let $\b G(I,M)$ = $\bigotimes_{j=1}^d \b G_j$, where $\b G_j$ is an identity matrix of order $r_j+1$ without the first columns if $j\in I$ and $\b 1_{r_j+1}$ otherwise. Let $\b G(M)$ be the matrix whose columns are given by the $\b G(I,M)$ matrices arranged one aside the other in lexicographic order. It is easily verified that $\b G(M)$ is the right inverse of $\b C(M)$; this implies  the reconstruction formula
{
\begin{equation}
\log \b p(M) = \b G(M)\b \eta(M) -\b 1 \log\{\b 1\tr \exp[\b G(M)\b
\eta(M)]\}.
\label{reconstruct}
\end{equation}
}
\subsection{The mixed parameterization}
Within the distribution in $M$, the vector of {\em mean} parameters
$\b\mu_{\cgl P(M)}$  \cite[p. 121]{BarNi:79} is the expected value
of the sufficient statistics for $\b\eta(M)$ in a sample of size 1
and equals
$$
\b\mu_{\cgl P(M)} = \bt G(M)\b p(M);
$$
there is a diffeomorphism between $\b\mu_{\cgl P(M)}$ and
$\b\eta(M)$  \citep[p. 121]{BarNi:79}.
Because each block of rows $\b C(I,M)$ in $\b C(M)$ corresponds to a block of columns $\b G(I,M)$ in $\b G(M)$,
we may define $\b\mu(I)$ = $G(I,M)\tr \b p(M)$ to be the collection of mean parameters for a given interaction. It is worth noting that, though mean parameters, like canonical parameters, are associated to interactions $I\in \cg P(M)$, $\b \mu(I)$ may be defined in any marginal such that $I\subseteq M$. Having coded the canonical parameters as contrasts with respect to the initial category, the corresponding mean parameters are simply marginal probabilities.

We recall a definition and a few results which are relevant in the following.
\begin{definition}
For an arbitrary margin $M$, let $(\cg U,\:\cg V)$ be a partition of
$\cg P(M)$; the pair of
vectors $[\b\eta_{\cgl U,M}, \b \mu_{\cgl V}]$, where $\b\eta_{\cgl
U,M}$ = $(\b \eta(I,M), I \in \cg U)$ is composed of canonical
parameters, and $\b \mu_{\cgl V}$ = $(\b \mu(I), I \in \cg V)$ is
composed of mean parameters, constitute a {\em mixed
parameterization} of the marginal distribution $\b p(M)$.
\end{definition}
 In the following, to be short, we will often refer to the log-linear parameters $\b\eta_{\cgl
U,M}$ = $(\b \eta(I,M), I \in \cg U)$ as {\it log-linear interactions in $\cg U$ } or {\it collection $\cg U$ of log-linear parameters}.
\begin{lemma}
For any mixed parameterization, there is a diffeomorphism between
the vector of mean parameters $\b \mu_{\cgl P(M)}$ and the pair of
vectors $[\b\eta_{\cgl U,M}, \b \mu_{\cgl V}]$; in addition, the
two components are variation independent.
\end{lemma}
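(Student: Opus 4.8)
The plan is to use the fact, recalled just above, that $\b p(M)$ forms a regular minimal exponential family on the open simplex, with full-dimensional canonical parameter $\b\eta(M)$, and then to combine the global canonical--to--mean diffeomorphism with the implicit function theorem applied one block of parameters at a time.

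First I would record the analytic facts underlying~(\ref{reconstruct}). Setting $\psi(\b\eta)=\log\{\b 1\tr\exp[\b G(M)\b\eta]\}$, differentiation gives $\nabla\psi(\b\eta)=\b G(M)\tr\b p(M)=\b\mu_{\cgl P(M)}$ and $\nabla^2\psi(\b\eta)=\b G(M)\tr(\diag\b p(M)-\b p(M)\b p(M)\tr)\b G(M)$. Since $\b G(M)$ is a right inverse of $\b C(M)$ it has full column rank, and since each Kronecker factor $\b C_j$ with $j\in I$ annihilates $\b 1_{r_j+1}$ we have $\b C(M)\b 1=\b 0$, so $\b 1$ is not in the column space of $\b G(M)$; hence $\nabla^2\psi$ is positive definite everywhere. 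Thus $\psi$ is strictly convex and $\b\eta(M)\mapsto\b\mu_{\cgl P(M)}$ is a diffeomorphism onto the interior $\cg M$ of the convex hull of the cell statistics $\b G(M)\tr\b e_{\b x}$ (with $\b e_{\b x}$ the indicator of cell $\b x$); composing with $\b\mu_{\cgl P(M)}\mapsto\b p(M)$ reproduces~(\ref{reconstruct}). This is the Barndorff--Nielsen diffeomorphism recalled above.

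Next I would freeze the $\cg U$--block. Reorder the blocks so $\b\eta(M)=(\b\eta_{\cgl U,M},\b\eta_{\cgl V,M})$ and $\b\mu_{\cgl P(M)}=(\b\mu_{\cgl U},\b\mu_{\cgl V})$, and fix $\b\eta_{\cgl U,M}$ at an arbitrary value. The partial map $\b\eta_{\cgl V,M}\mapsto\b\mu_{\cgl V}=\nabla_{\cgl V}\psi(\b\eta_{\cgl U,M},\b\eta_{\cgl V,M})$ is again the mean-value map of a minimal exponential family, namely the subfamily obtained by freezing $\b\eta_{\cgl U,M}$: it still assigns positive mass to every cell and has sufficient statistic the $\cg V$--block of $\b G(M)\tr\b e_{\b x}$. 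Its Hessian is the principal submatrix $\nabla^2_{\cgl V \cgl V}\psi$, positive definite because $\nabla^2\psi$ is; so this partial map is a diffeomorphism onto the interior $\cg M_{\cgl V}$ of the convex hull of $\{\b G(\cg V,M)\tr\b e_{\b x}\}$, where $\b G(\cg V,M)$ collects the columns of $\b G(M)$ indexed by $I\in\cg V$. The point to emphasize is that $\cg M_{\cgl V}$ is a fixed open convex set with no dependence on $\b\eta_{\cgl U,M}$, while the range of $\b\eta_{\cgl U,M}$ itself is the full Euclidean space of the appropriate dimension, by variation independence of the complete canonical parameter.

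Finally I would assemble the map. Define $\Phi(\b\eta_{\cgl U,M},\b\mu_{\cgl V})$ by solving $\nabla_{\cgl V}\psi(\b\eta_{\cgl U,M},\b\eta_{\cgl V,M})=\b\mu_{\cgl V}$ for the unique $\b\eta_{\cgl V,M}$ and returning $\b\mu_{\cgl P(M)}=\nabla\psi(\b\eta_{\cgl U,M},\b\eta_{\cgl V,M})$ (equivalently $\b p(M)$). By the implicit function theorem applied to that equation, whose Jacobian in $\b\eta_{\cgl V,M}$ is the invertible matrix $\nabla^2_{\cgl V \cgl V}\psi$, the map $(\b\eta_{\cgl U,M},\b\mu_{\cgl V})\mapsto\b\eta_{\cgl V,M}$ is smooth, hence so is $\Phi$; and its domain is the Cartesian product of the range of $\b\eta_{\cgl U,M}$ with $\cg M_{\cgl V}$, which is precisely the asserted variation independence. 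The inverse sends $\b\mu_{\cgl P(M)}\mapsto\b\eta(M)$ (the first-step diffeomorphism), keeps the $\cg U$--block, and replaces the $\cg V$--block by $\nabla_{\cgl V}\psi(\b\eta(M))=\b\mu_{\cgl V}$; this is a composition of smooth maps, so $\Phi$ is a diffeomorphism. The one step that needs genuine care -- the rest being exponential-family bookkeeping plus the implicit function theorem -- is the claim that the attainable set $\cg M_{\cgl V}$ of $\b\mu_{\cgl V}$ is the same open convex set for every frozen value of $\b\eta_{\cgl U,M}$; this is what makes the domain a product, and it holds because freezing $\b\eta_{\cgl U,M}$ leaves a full-support exponential family, whose convex support is parameter-free.
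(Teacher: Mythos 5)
Your proof is correct and is essentially the paper's proof: the paper disposes of this lemma by citing Barndorff--Nielsen (1979, pp.~121--122), and what you have written out is precisely that standard exponential-family argument (strict convexity of the log-partition function $\psi$, the mean map of the frozen subfamily being a diffeomorphism onto the interior of a convex support, and the implicit function theorem to assemble the pieces). The step you flag as needing genuine care --- that the attainable set of $\b \mu_{\cgl V}$ is the same open convex set for every value of $\b\eta_{\cgl U,M}$, because tilting by the frozen block preserves the support of the base measure --- is indeed the crux of variation independence, and you handle it correctly.
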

\begin{proof}
See \citep[][p. 121-122]{BarNi:79}
\end{proof}
The numerical algorithm for reconstructing $\b p(M)$ from
$[\b\eta_{\cgl U,M}, \b \mu_{\cgl V}]$ given by \citet{Forcina:11}
is a faster alternative to the usual IPF algorithm.

The mixed parameterization is a powerful tool for reconstructing a joint distribution from marginal log-linear parameters because one can process one marginal distribution at a time by combining the log-linear parameters defined within that distributions with the mean parameters, or, equivalently, marginal probabilities, available from marginal distributions reconstructed in previous steps. As long as these two sets of interactions are a partition of $\cg P(M)$, the basic argument used by \citet{BaCoFo:07} implies that any model defined by linear constraints on the marginal log-linear parameters constitutes a curved exponential family and thus is smooth.
\section{The LM reconstruction algorithm}
In this section we investigate the properties of conditional independence models which require to impose non trivial constraints on the same log-linear interactions defined in two or more marginal distributions. We may suppose, without loss of generality, that the marginals of interest are arranged in non decreasing order and that they will be processed one at a time, starting from the first one. In this way, at each step in the reconstruction of the joint distribution from its marginal log-linear parameters, we need only be concerned with the marginal at hand and examine whether, by use of the mixed parameterization, we may combine the mean parameters from previous marginals with the log-linear parameters which are either available or need to be constrained in the marginal under consideration. An algorithm for doing this is presented and its convergence properties investigated.
\subsection{Motivating examples}
We now present a set of examples which will highlight different features of the kind of models we are going to consider. Each model is made of two parts: (i) a list of conditional independencies
 which have  been accommodated, somehow, in previous marginals (ii) an additional conditional independence  to be imposed in the current marginal $M$. We start with a couple of elementary models:
\begin{example}\label{ex:1} 
Suppose that, having assumed that $1\ci 2\mid 3$ in the marginal 123, in the marginal $M=1234$ we want also $1\ci 2 \mid (3,4)$. Here we need to constrain again the $\{12,123\}$ interactions; in the binary case, \citet{RobEva} has shown that the model has singularities.
\end{example}
\begin{example}\label{ex:2}  
Suppose that, having assumed that $1\ci (2,4)$ in the marginal 124, we want also $2\ci 4 \mid (1,3)$. Here, in addition to the $124$ interaction which has already been constrained in $124$, we need to constrain $24$ which was defined in the previous marginal; in the binary case, \citet{Drton:DCG} has shown that the model has singularities.
\end{example}
The nest example is a little more complex:
\begin{example} \label{ex:6} 
Having assumed that $1\ci 2\mid 3$ and $1\ci 3\mid 4$ we also want $1 \ci (2,3)\mid (4,5)$; here the list of interactions to be constrained again is given by $\{12, 123, 13, 134\}$.
\end{example}
The following examples are different because the collection of interactions that have already been defined in previous marginal is too large to be redefined again in $M$:
\begin{example}\label{ex:5}  
Suppose that, having set $1\ci 2\mid (3,4)$ and $1\ci 2\mid (3,5)$ we also want $1\ci 2\mid (3,4,5)$; here the collection of interactions that have already been defined and that have to be constrained again is $\{12,123,124,125,1234,1235\}$.
\end{example}
\begin{example} \label{ex:7}   
Suppose that, having set $1\ci 2\mid (3,4)$, $1\ci 2\mid (3,5)$ and $1\ci 2\mid (4,5)$ we also want $1\ci 2\mid (3,4,5)$, here all the interactions in the ascending class from $12$ to $M=12345$, except $M$ itself, have to be constrained again.
\end{example}
\subsection{Setting up the framework}
Let $M$ denote the current marginal, $\cg V$ the collection of interactions defined in previous marginals which belong to $\cg P(M)$ and $\cg L$ = $\cg P(M)\bks \cg V$. Let also $\cg A$ be the collection of interactions to be constrained in $M$ according to the last conditional independence statement; whenever $\cg V\cap \cg A\ne \emptyset$, we are trying to constrain again the corresponding log-linear interaction.  Though we would like to redefine and constrain in $M$ all the interactions in $\cg V\cap\cg A$, we shall see that this is not always possible; denote by $\cg I\subseteq (\cg V\cap \cg A)$ the actual collection which we
redefine in $M$ and $\cg R=\cg V\bks \cg I$ the remaining interactions.

Because the mean parameters in $\cg I\cup \cg R$ together with the log-linear parameters in $\cg L$ constitute a mixed parameterization of $\b p(M)$, these parameters determine uniquely the value of the log-linear parameters in $\cg I$ to be redefined within $M$; thus they cannot be constrained again,  unless we remove from $\cg L$ a collection, say $\cg H$, of log-linear interactions with exactly the same number of parameters as the collection $\cg I$; below we investigate whether such an atypical parameterization may provide a smooth mapping. We shall see that the two sets $\cg I,\cg H$ must be chosen carefully and satisfy a set of conditions which establish a close relation between them.
\begin{example}
Consider again example \ref{ex:5}, here $\cg V$ = $\cg P(1234) \cup \cg P(1235)$, $\cg A$ = $\{12, 123,124,$ $125,1234, 1235,1245,12345\}$ and $\cg A\cap \cg V$ = $\{12,123,124,125,1234,1235\}$; as we shall see, not all the elements of this collection can be redefined in $M$, the most we can achieve is to set $\cg I$ = $\{12,123\}$ and $\cg H =\{1245,12345\}$ where $X_4$ and $X_5$ are fixed to a given category.
\end{example}
 \begin{example}
In example \ref{ex:2}, $\cg V = \cg P(124)$, $\cg L =\cg P(1234)\bks \cg P(124)$; suppose we set $\cg I=\{24,124\}$ and $\cg H=\{234,1234\}$ where $X_3$ is fixed to a given category; it can be easily checked that $\cg H$ indexes the same number of parameters as $\cg I$.
\end{example}
\subsection{Description of the algorithm}
The problem, when reconstructing the distribution in $M$, is how to combine the mean parameter $\b\mu_{\cgl I}$, available from previous marginals with the log-linear parameters $\b\eta_{\cgl I;M}$ defined again in the present marginal. Recall that the mixed parameterization require that mean parameters and log-linear interactions must refer to two complementary sets whose union is $\cg P(M)$. The idea is to remove from the log-linear parameters $\cg L$, to be defined in $M$, the subset $\cg H$ with the same number of parameters as the elements of $\cg I$. The algorithm that we describe below can handle such a context and  the issue will be to determine under which conditions such an algorithm may converge; if it does, then it can be shown that the model is smooth. The algorithm for reconstructing the marginal distribution in $M$ is made of two steps and require starting values for $\b\eta_{\cgl H;M}$:
\begin{description}
\item{M-step}
given the latest guess for the log-linear parameters $\b\eta_{\cgl
H;M}$, an updated estimate for the vector of mean parameters
$\b\mu_{\cgl H}$ may be computed by a mixed parameterization with
mean parameters indexed by the collection of interactions $\cg R\cup \cg I$ and log-linear parameters indexed by $\cg L$;
\item{L-step}
given the latest guess for the vector of mean parameters $\b\mu_{\cgl H}$, an updated estimate for the vector of log-linear parameters $\b\eta_{\cgl H;M}$ may be computed by a mixed parameterization with mean parameters indexed by $\cg R\cup \cg H$ and log-linear parameters indexed by $\cg I\cup (\cg L\bks H)$.
\end{description}

In order to examine the properties of the LM algorithm, we need to determine how changes in the input value of $\b\eta_{\cgl H;M}$ in the M step affects the output value produced in the L step. For this purpose, we recall results concerning the derivatives of certain components of the mixed parameterization relative to others which are relevant here. In the following write $\b \pi$ as a shorthand for $\b p(M)$, let $\b D_{\bl\pi}$ = $\diag(\b\pi)$ and let $\b \Omega=\b D_{\bl\pi} -\b \pi \b \pi \tr$
denote the derivative of $\b \pi$ with respect to $\b \eta(M)\tr$.
\begin{lemma}
$$
\b F(M) = \frac{\partial\b \mu_{\cgl P(M)}}{\partial\b\eta(M)\tr} =
\b G(M)\tr \b\Omega(M) \b G(M),
$$
is the covariance matrix of a collection of distinct binary
variables determined by the columns of $\b G(M)$ and thus is positive definite.
\end{lemma}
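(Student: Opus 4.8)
The plan is to derive the factorization by applying the chain rule to the reconstruction map and then to recognise $\b F(M)$ as a genuine covariance matrix, from which positive definiteness follows after a short kernel computation. First I would use the reconstruction formula~(\ref{reconstruct}), which may be rewritten as $\b\pi=\exp[\b G(M)\b\eta(M)]/\{\b 1\tr\exp[\b G(M)\b\eta(M)]\}$. Differentiating this map with respect to $\b\eta(M)\tr$ gives $\partial\b\pi/\partial\b\eta(M)\tr=(\b D_{\bl\pi}-\b\pi\b\pi\tr)\,\b G(M)=\b\Omega(M)\,\b G(M)$, and since $\b\mu_{\cgl P(M)}=\b G(M)\tr\b\pi$ is linear in $\b\pi$, the chain rule then yields $\b F(M)=\b G(M)\tr\,\partial\b\pi/\partial\b\eta(M)\tr=\b G(M)\tr\,\b\Omega(M)\,\b G(M)$, which is the asserted identity.

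Next I would unpack the columns of $\b G(M)$. By construction $\b G(I,M)=\bigotimes_{j}\b G_j$ selects, for every $j\in I$, one non-reference category of $X_j$ and, for every $j\notin I$, sums over all categories; hence each column of $\b G(M)$, indexed by a pair $(I,\b x_I)$ with $I\in\cg P(M)$ and $x_i>0$ for all $i\in I$, is the $\{0,1\}$-valued indicator, over the cells of the marginal $M$, of the event $\{X_i=x_i,\ \forall i\in I\}$, and distinct pairs $(I,\b x_I)$ index distinct events, hence distinct columns. Writing $\b s$ for the random $\{0,1\}$ cell-indicator vector of a single draw from $\b\pi$, so that $\b\Omega(M)=\mathrm{Cov}(\b s)$, the entries of $\b G(M)\tr\b s$ are exactly the indicators of those events --- a collection of distinct binary variables --- and $\b F(M)=\b G(M)\tr\,\mathrm{Cov}(\b s)\,\b G(M)=\mathrm{Cov}\bigl(\b G(M)\tr\b s\bigr)$ is their covariance matrix; in particular $\b F(M)$ is positive semidefinite.

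It then remains to show that the kernel is trivial. If $\b F(M)\b a=\b 0$ then $\b a\tr\b F(M)\b a=(\b G(M)\b a)\tr\b\Omega(M)(\b G(M)\b a)=0$, and since $\b\Omega(M)$ is positive semidefinite this forces $\b\Omega(M)\b G(M)\b a=\b 0$, i.e.\ $\b G(M)\b a\in\ker\b\Omega(M)$. For $\b\pi$ in the interior of the simplex one checks directly that $\b\Omega(M)\b v=\b 0$ holds iff $\b v$ is a constant vector, so $\ker\b\Omega(M)=\mathrm{span}(\b 1)$ and thus $\b G(M)\b a=\lambda\b 1$ for some scalar $\lambda$. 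Premultiplying by $\b C(M)$ and using that $\b G(M)$ is its right inverse gives $\b a=\b C(M)\b G(M)\b a=\lambda\,\b C(M)\b 1$; but for every non-empty $I$ and any $j\in I$ the factor $\b C_j\b 1_{r_j+1}=-\b 1_{r_j}+\b 1_{r_j}=\b 0$, so $\b C(I,M)\b 1=\bigotimes_j(\b C_j\b 1_{r_j+1})=\b 0$ and hence $\b C(M)\b 1=\b 0$, giving $\b a=\b 0$. Therefore $\b F(M)$ is positive definite. The only genuinely delicate step is this last one --- equivalently, the statement that the reference-category indicators coded by the columns of $\b G(M)$ together with the constant vector $\b 1$ are linearly independent, i.e.\ $\b 1\notin\mathrm{col}\,\b G(M)$; everything else is routine differentiation and Kronecker-product bookkeeping.
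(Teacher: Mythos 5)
Your argument is correct and complete. Note that the paper itself does not spell out a proof of this lemma; it simply defers to Forcina (2011), so there is no in-text argument to compare against. What you supply is a self-contained derivation that covers the three things the cited reference is being asked to deliver: (a) the factorization $\b F(M)=\b G(M)\tr\b\Omega(M)\b G(M)$ via the chain rule applied to the exponential-family map $\b\eta(M)\mapsto\b\pi$ from (\ref{reconstruct}); (b) the probabilistic reading of the columns of $\b G(M)$ as indicators of the events $\{X_i=x_i,\ \forall i\in I\}$ with $x_i>0$, which makes $\b F(M)=\mathrm{Cov}(\b G(M)\tr\b s)$ and hence positive semidefinite; and (c) the strict definiteness. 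You are right that (c) is the only delicate point --- ``covariance matrix of distinct binary variables'' gives semidefiniteness for free, and definiteness is exactly the statement that no nontrivial linear combination of these indicators is almost surely constant. Your kernel computation handles this cleanly and uses only facts stated in the paper: $\ker\b\Omega(M)=\mathrm{span}(\b 1)$ for $\b\pi$ interior to the simplex (implicit in any log-linear parameterization), $\b C(M)\b G(M)=\b I$ because $\b G(M)$ is the right inverse of $\b C(M)$, and $\b C(I,M)\b 1=\b 0$ for every non-empty $I$ since the factor $\b C_j\b 1_{r_j+1}$ vanishes for $j\in I$. This is a tidy way of establishing simultaneously that the columns of $\b G(M)$ are linearly independent and that $\b 1$ is not in their span, which is the affine-independence condition needed for the covariance matrix to be nonsingular. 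No gaps.
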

\begin{proof}
See \citet{Forcina:11}.
\end{proof}
Any two subsets of interactions $\cg H,\: \cg K\subseteq \cg P(M)$ determine two sub-collections of binary random variables and a block in the covariance matrix $\b F(M)$. In the following we omit reference to the marginal $M$ when it is obvious from the context and write
$$
\b F_{\cgl H\cgl K} = \b G_{\cgl H}\tr \b\Omega \b G_{\cgl K}.
$$
\begin{lemma}
In the M-step, where $\cg H$ is part of the log-linear parameter
$$
\frac{\partial\b\mu_{\cgl H}}{\partial\b\eta_{\cgl H;M}\tr}=\b B =
\b F_{\cgl H\cgl H}-\b F_{\cgl H \cgl V}\b F_{\cgl V \cgl V}^{-1}\b
F_{ \cgl V \cgl H};
$$
in the L-step, where $\cg H$ is part of the mean parameter
$$
\frac{\partial \b\eta_{\cgl H;M}}{\partial \b\mu_{\cgl H}}=\b A^{-1} =
\left(\b F_{(\cgl R\cup \cgl H)(\cgl R\cup \cgl H)}^{-1}\right)_{\cgl H\cgl H} = \left( \b
F_{\cgl H\cgl H}-\b F_{\cgl H\cgl R}\b F_{\cgl R\cgl R}^{-1}\b
F_{\cgl R \cgl H}\right)^{-1},
$$
where we have used the formula for the inverse of a partitioned
matrix.
\end{lemma}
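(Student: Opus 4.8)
The plan is to read both formulae as conditional (profiled) derivatives inside a mixed parameterization and to reduce each of them, via the implicit function theorem, to an elementary block manipulation of the single Jacobian $\b F=\b F(M)=\partial\b\mu_{\cgl P(M)}/\partial\b\eta(M)\tr$ supplied by the preceding lemma. I would first record the two facts that make the computation well posed. By the diffeomorphism property of the mixed parameterization, for each of the two partitions of $\cg P(M)$ used in the M- and L-steps the pair $[\b\eta_{\cgl U,M},\b\mu_{\cgl V}]$ determines $\b\pi$, hence $\b\eta(M)$, and therefore determines both $\b\mu_{\cgl H}=\b G_{\cgl H}\tr\b\pi$ and the sub-vector $\b\eta_{\cgl H;M}$ of $\b\eta(M)$, smoothly; so every partial derivative in the statement exists. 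Moreover, $\b F$ being positive definite, each of its principal submatrices is positive definite, hence invertible; in particular $\b F_{\cgl V\cgl V}$, $\b F_{\cgl R\cgl R}$ and $\b F_{(\cgl R\cup\cgl H)(\cgl R\cup\cgl H)}$ are invertible, and so are the Schur complements that appear below, which is exactly what gives the displayed inverses a meaning (and, incidentally, shows $\b B$ and $\b A$ to be symmetric positive definite).

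For the M-step I would split $\cg P(M)$ into the three disjoint blocks $\cg H$, $\cg L\bks\cg H$ and $\cg V$ and write the differential of the map $\b\eta(M)\mapsto\b\mu_{\cgl P(M)}$, namely $d\b\mu=\b F\,d\b\eta$, in this block form. The M-step freezes $\b\eta_{\cgl L\bks\cgl H;M}$ and $\b\mu_{\cgl V}$ while perturbing $\b\eta_{\cgl H;M}$, i.e. it imposes $d\b\eta_{\cgl L\bks\cgl H}=\b 0$ and $d\b\mu_{\cgl V}=\b 0$. Under $d\b\eta_{\cgl L\bks\cgl H}=\b 0$ the $\cg V$-block of $d\b\mu=\b F\,d\b\eta$ becomes $\b 0=\b F_{\cgl V\cgl H}\,d\b\eta_{\cgl H}+\b F_{\cgl V\cgl V}\,d\b\eta_{\cgl V}$, so $d\b\eta_{\cgl V}=-\b F_{\cgl V\cgl V}^{-1}\b F_{\cgl V\cgl H}\,d\b\eta_{\cgl H}$, leaving $d\b\eta_{\cgl H}$ as the free perturbation. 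Substituting this into the $\cg H$-block $d\b\mu_{\cgl H}=\b F_{\cgl H\cgl H}\,d\b\eta_{\cgl H}+\b F_{\cgl H\cgl V}\,d\b\eta_{\cgl V}$ gives $\partial\b\mu_{\cgl H}/\partial\b\eta_{\cgl H;M}\tr=\b F_{\cgl H\cgl H}-\b F_{\cgl H\cgl V}\b F_{\cgl V\cgl V}^{-1}\b F_{\cgl V\cgl H}=\b B$.

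For the L-step the only change is that $\cg H$ now belongs to the mean block: I would split $\cg P(M)$ into the disjoint blocks $\cg H$, $\cg R$ and $\cg S:=\cg I\cup(\cg L\bks\cg H)$, and freeze $\b\mu_{\cgl R}$ together with $\b\eta_{\cgl S;M}$, i.e. impose $d\b\mu_{\cgl R}=\b 0$ and $d\b\eta_{\cgl S}=\b 0$. Under $d\b\eta_{\cgl S}=\b 0$ the $\cg R$-block of $d\b\mu=\b F\,d\b\eta$ becomes $\b 0=\b F_{\cgl R\cgl H}\,d\b\eta_{\cgl H}+\b F_{\cgl R\cgl R}\,d\b\eta_{\cgl R}$, so $d\b\eta_{\cgl R}=-\b F_{\cgl R\cgl R}^{-1}\b F_{\cgl R\cgl H}\,d\b\eta_{\cgl H}$; substituting into the $\cg H$-block $d\b\mu_{\cgl H}=\b F_{\cgl H\cgl H}\,d\b\eta_{\cgl H}+\b F_{\cgl H\cgl R}\,d\b\eta_{\cgl R}$ gives $d\b\mu_{\cgl H}=(\b F_{\cgl H\cgl H}-\b F_{\cgl H\cgl R}\b F_{\cgl R\cgl R}^{-1}\b F_{\cgl R\cgl H})\,d\b\eta_{\cgl H}=\b A\,d\b\eta_{\cgl H}$, and inverting the (invertible) matrix $\b A$ yields $\partial\b\eta_{\cgl H;M}/\partial\b\mu_{\cgl H}=\b A^{-1}$. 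The remaining equality $\b A^{-1}=(\b F_{(\cgl R\cup\cgl H)(\cgl R\cup\cgl H)}^{-1})_{\cgl H\cgl H}$ is then nothing other than the partitioned-inverse (Schur complement) identity applied to the $2\times2$ block decomposition of the principal submatrix $\b F_{(\cgl R\cup\cgl H)(\cgl R\cup\cgl H)}$ along $\cg H$ and $\cg R$, exactly as anticipated in the statement.

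I do not expect any genuinely difficult step. The one thing to be careful with is the bookkeeping of which subsets are frozen in each step, together with the asymmetric role of $\cg H$ --- it sits on the log-linear side in the M-step and on the mean side in the L-step --- which is precisely why the block that is profiled out is $\cg V$ in the first case and $\cg R$ in the second. Everything else is a single application of the implicit function theorem to $d\b\mu=\b F\,d\b\eta$ followed by elimination of the frozen $\b\eta$-block, all the required invertibility being inherited from the positive definiteness of $\b F$.
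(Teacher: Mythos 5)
Your derivation is correct. The paper itself gives no argument here: its entire proof is a citation to Lemma 4 of Forcina (2011), which is where the derivative formulae for components of a mixed parameterization are worked out. What you have written is, in effect, a self-contained reconstruction of that external lemma: you take the single Jacobian $\b F=\b G(M)\tr\b\Omega\b G(M)$ from the preceding lemma, write $d\b\mu=\b F\,d\b\eta$ in block form, and eliminate the frozen block ($\cg V$ in the M-step, $\cg R$ in the L-step) to obtain the two Schur complements $\b B$ and $\b A$, with invertibility inherited from the positive definiteness of $\b F$. Your bookkeeping of which quantities are held fixed in each step matches the algorithm's description exactly (mean parameters on $\cg R\cup\cg I=\cg V$ and canonical parameters on $\cg L$ in the M-step; mean parameters on $\cg R\cup\cg H$ and canonical parameters on $\cg I\cup(\cg L\bks\cg H)$ in the L-step), and the final identity $\b A^{-1}=\bigl(\b F_{(\cgl R\cup\cgl H)(\cgl R\cup\cgl H)}^{-1}\bigr)_{\cgl H\cgl H}$ is the standard partitioned-inverse formula, as the statement itself indicates. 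The only thing your argument buys beyond the paper is self-containedness; the only thing it omits relative to full rigour is an explicit appeal to the inverse/implicit function theorem to justify passing from $d\b\mu_{\cgl H}=\b A\,d\b\eta_{\cgl H}$ to $\partial\b\eta_{\cgl H;M}/\partial\b\mu_{\cgl H}=\b A^{-1}$, but you do invoke the diffeomorphism property of the mixed parameterization for exactly this purpose, so there is no gap.
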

\begin{proof}
the result follows from Lemma 4 in \citet{Forcina:11}.
\end{proof}
A full step of the LM algorithm may be seen as a fixed point
function which, given a guess value of $\b\eta_{\cgl H;M}$,
produces an updated estimate of the same vector. A sufficient
condition for an algorithm to be a contraction \cite[see for
example][]{Agarw:2001}, a property which implies that it converges to a unique solution, is that the jacobian of a full LM step has spectral radius (maximum absolute eigenvalue) strictly smaller than 1. Let $\b J$ = $\b A^{-1}\b B$ be the jacobian of this mapping; let also $\b Q_{\cgl I\cgl H\mid\cgl R}$ =  $\b F_{\cgl I\cgl H}-\b F_{\cgl I\cgl R}\b F_{\cgl R\cgl R}^{-1}\b F_{\cgl R \cgl H}$. An upper bound for the spectral radius of $\b J$ is determined in the following lemma.
\begin{lemma}\label{le:spectr}
The spectral radius of $\b J$ is always less than 1 except when $\b Q_{\cgl I\cgl H\mid\cgl R}$ is not of full rank.
\end{lemma}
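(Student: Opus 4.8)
The strategy is to bound the spectral radius of $\b J = \b A^{-1}\b B$ by exhibiting it as a product of matrices each of which is, up to congruence, a projection-type operator arising from the positive definite form $\b F(M)$. First I would rewrite the two Schur complements entering $\b B$ and $\b A$ in a common ``conditional'' notation: set $\b F_{\cgl H\cgl H\mid\cgl R} = \b F_{\cgl H\cgl H}-\b F_{\cgl H\cgl R}\b F_{\cgl R\cgl R}^{-1}\b F_{\cgl R\cgl H}$ (so that $\b A = \b F_{\cgl H\cgl H\mid\cgl R}$), and observe that since $\cg V = \cg R\cup\cg I$, the matrix $\b B = \b F_{\cgl H\cgl H}-\b F_{\cgl H\cgl V}\b F_{\cgl V\cgl V}^{-1}\b F_{\cgl V\cgl H}$ is the Schur complement of $\cg V$, which by the quotient property of Schur complements equals the Schur complement of $\cg I$ inside the already-conditioned-on-$\cg R$ block. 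Concretely, $\b B = \b F_{\cgl H\cgl H\mid\cgl R} - \b Q_{\cgl I\cgl H\mid\cgl R}\tr (\b F_{\cgl I\cgl I\mid\cgl R})^{-1}\b Q_{\cgl I\cgl H\mid\cgl R}$, where $\b Q_{\cgl I\cgl H\mid\cgl R} = \b F_{\cgl I\cgl H}-\b F_{\cgl I\cgl R}\b F_{\cgl R\cgl R}^{-1}\b F_{\cgl R\cgl H}$ as defined before the lemma. This is the key algebraic identity and I would verify it by a direct block-elimination argument, eliminating $\cg R$ first and then $\cg I$.

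**Reduction to a positive-definite comparison.** With that identity in hand, $\b J = \b A^{-1}\b B = \b I - (\b F_{\cgl H\cgl H\mid\cgl R})^{-1}\b Q_{\cgl I\cgl H\mid\cgl R}\tr (\b F_{\cgl I\cgl I\mid\cgl R})^{-1}\b Q_{\cgl I\cgl H\mid\cgl R}$. The eigenvalues of $\b J$ coincide with those of $\b I - \b S$, where $\b S = (\b F_{\cgl H\cgl H\mid\cgl R})^{-1/2}\,\b Q_{\cgl I\cgl H\mid\cgl R}\tr (\b F_{\cgl I\cgl I\mid\cgl R})^{-1}\b Q_{\cgl I\cgl H\mid\cgl R}\,(\b F_{\cgl H\cgl H\mid\cgl R})^{-1/2}$, a similarity transform by $(\b F_{\cgl H\cgl H\mid\cgl R})^{1/2}$ (this matrix is positive definite because $\b F(M)$ is, by Lemma 4, and Schur complements of positive definite matrices are positive definite). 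Now $\b S$ is symmetric positive \emph{semi}-definite, so all eigenvalues of $\b J$ are real and at most $1$; they are strictly less than $1$ iff $\b S$ is nonsingular, i.e.\ iff $\b Q_{\cgl I\cgl H\mid\cgl R}$ has full rank. It remains to rule out eigenvalues of $\b J$ that are $\le -1$, equivalently eigenvalues of $\b S$ that are $\ge 2$. For this I would show $\b S \preceq \b I$ in the Loewner order, which amounts to $\b Q_{\cgl I\cgl H\mid\cgl R}\tr (\b F_{\cgl I\cgl I\mid\cgl R})^{-1}\b Q_{\cgl I\cgl H\mid\cgl R} \preceq \b F_{\cgl H\cgl H\mid\cgl R}$; but this is exactly the statement that $\b B \succeq 0$, and $\b B$ is itself a Schur complement of the positive definite matrix $\b F_{(\cgl V\cup\cgl H)(\cgl V\cup\cgl H)}$, hence positive semidefinite. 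Therefore every eigenvalue $\lambda$ of $\b J$ satisfies $0 \le \lambda \le 1$, with $\lambda < 1$ throughout precisely when $\b Q_{\cgl I\cgl H\mid\cgl R}$ is of full rank.

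**Main obstacle.** The routine parts — the similarity transform, the sign of $\b S$, and the comparison $\b S\preceq\b I$ — all follow mechanically from positive-definiteness of $\b F(M)$ and the standard Schur-complement facts. The one step that needs genuine care is the algebraic identity $\b B = \b F_{\cgl H\cgl H\mid\cgl R} - \b Q_{\cgl I\cgl H\mid\cgl R}\tr (\b F_{\cgl I\cgl I\mid\cgl R})^{-1}\b Q_{\cgl I\cgl H\mid\cgl R}$: it hinges on the fact that the index set $\cg V$ partitions as $\cg R\cup\cg I$ with $\cg R$ and $\cg I$ disjoint (which is how $\cg R$ and $\cg I$ were defined), so that conditioning on $\cg V$ can be done in two stages. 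I would state this as a preliminary lemma on iterated Schur complements (the ``quotient formula''), prove it by block Gaussian elimination, and then simply instantiate it with the blocks of $\b F(M)$ indexed by $\cg R$, $\cg I$, $\cg H$. A secondary point worth a sentence is that $\b A$ and $\b B$ are both genuinely invertible where needed: $\b A = \b F_{\cgl H\cgl H\mid\cgl R}$ is positive definite as a Schur complement of the positive definite $\b F_{(\cgl R\cup\cgl H)(\cgl R\cup\cgl H)}$, so $\b A^{-1}\b B$ is well defined and the spectral-radius computation above is legitimate.
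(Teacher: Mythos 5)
Your proposal is correct and follows essentially the same route as the paper: the heart of both arguments is the identity $\b A-\b B=\b Q_{\cgl I\cgl H\mid\cgl R}\tr\bigl(\b F_{\cgl I\cgl I}-\b F_{\cgl I\cgl R}\b F_{\cgl R\cgl R}^{-1}\b F_{\cgl R\cgl I}\bigr)^{-1}\b Q_{\cgl I\cgl H\mid\cgl R}\succeq\b 0$, obtained by conditioning on $\cg V=\cg R\cup\cg I$ in two stages. The only difference is that where the paper cites Theorem 7.7.3 of Horn and Johnson to pass from $\b 0\preceq\b B\preceq\b A$ to the spectral radius bound on $\b A^{-1}\b B$, you prove that step directly by the symmetrization $\b S=(\b F_{\cgl H\cgl H\mid\cgl R})^{-1/2}(\b A-\b B)(\b F_{\cgl H\cgl H\mid\cgl R})^{-1/2}$, which is a self-contained rendering of the same fact.
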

\begin{proof} { See the Appendix.}
\end{proof}
The main result of this section is contained in the following Theorem and concerns the properties of the mapping from  $\b \xi$ = $(\b\eta_{\cgl L\cup \cgl I\bks \cgl H, M}, \b\mu_{\cgl V} )$ to $\b \pi$, under the assumption that the elements of $\b\xi$ are compatible, that is there is at least a $\b\pi$ with the parameters specified by $\b\xi$. The result depends on the spectral radius of the jacobian matrix $\b J$ defined above.
\begin{theorem}
\label{th:conv}
Under the assumption that the elements of $\b\xi$ are compatible, when $\b Q_{\cgl I\cgl H\mid\cgl R}$ is of full rank, the mapping from $(\b\eta_{\cgl L\cup \cgl I\bks \cgl H, M}, \b\mu_{\cgl V} )$ to $\b \pi$ is one to one and smooth. In the special case when $\b Q_{\cgl I\cgl H\mid\cgl R}=\b 0$, so that $\b J$ is an identity matrix, the mapping is not one to one. When $\b Q_{\cgl I\cgl H\mid\cgl R}$ is singular but different from a null matrix, the algorithm does not converge and nothing can be said about the smoothness of the mapping.
\end{theorem}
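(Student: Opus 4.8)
The plan is to split the argument into three regimes according to the rank behaviour of $\b Q_{\cgl I\cgl H\mid\cgl R}$, relying throughout on the fact that the M-step and the L-step are each diffeomorphisms (being instances of the mixed parameterization, Lemma 1 and Lemma 2) so that the only issue is the behaviour of their composition, whose jacobian is $\b J = \b A^{-1}\b B$.

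\textbf{Full rank case.} First I would treat the generic situation where $\b Q_{\cgl I\cgl H\mid\cgl R}$ has full rank. By Lemma \ref{le:spectr} the spectral radius of $\b J$ is then strictly less than $1$, so a full LM step is a contraction on the space of $\b\eta_{\cgl H;M}$ (invoking the standard fixed point / Banach contraction argument, as in \citet{Agarw:2001}); hence for every compatible $\b\xi$ the algorithm converges to a unique value of $\b\eta_{\cgl H;M}$, and therefore to a unique $\b\pi$. To get that the resulting map $\b\xi\mapsto\b\pi$ is not merely well defined but a diffeomorphism, I would note that once $\b\eta_{\cgl H;M}$ is pinned down we have recovered the full vector $(\b\eta_{\cgl L, M},\b\mu_{\cgl V})$, which is an honest mixed parameterization of $\b p(M)$ whose components partition $\cg P(M)$; smoothness and invertibility then follow from Lemma 1. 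Alternatively, and perhaps more cleanly, one can argue by the implicit function theorem applied to the fixed point equation $\b\eta_{\cgl H;M}=T(\b\eta_{\cgl H;M},\b\xi)$: since $\b I-\b J$ is invertible precisely when $1$ is not an eigenvalue of $\b J$, which holds here, the fixed point depends smoothly on $\b\xi$, and its jacobian with respect to $\b\xi$ is nonsingular because the overall map factors through a diffeomorphism.

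\textbf{Degenerate cases.} When $\b Q_{\cgl I\cgl H\mid\cgl R}=\b 0$, one reads off from the definitions of $\b A$ and $\b B$ that $\b A=\b B$, hence $\b J=\b I$: every point is a fixed point, so the algorithm stays at its starting value $\b\eta_{\cgl H;M}^{(0)}$ whatever that is. This means distinct choices of starting value produce distinct $\b\pi$'s all sharing the same $\b\xi$, so the map $\b\xi\mapsto\b\pi$ is not single-valued; I would make this explicit by exhibiting, for a fixed compatible $\b\xi$, a one-parameter family of $\b\pi$ obtained by varying $\b\eta_{\cgl H;M}$, using that $(\b\eta_{\cgl L, M},\b\eta_{\cgl H;M},\b\mu_{\cgl V})$-type coordinates are still free in the directions governed by $\cg H$ precisely because the relevant cross-derivative block vanishes. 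Finally, when $\b Q_{\cgl I\cgl H\mid\cgl R}$ is singular but nonzero, $\b J$ has $1$ as an eigenvalue but is not the identity, so the contraction argument fails and there is no fixed point structure forcing convergence; here I would simply observe that the algorithm need not converge and that the rank-deficiency of the relevant jacobian block prevents us from concluding smoothness either way, so the statement is the honest ``nothing can be said''.

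\textbf{Main obstacle.} The delicate point is the full-rank case, and specifically upgrading ``the algorithm converges to a unique $\b\pi$'' to ``the map $\b\xi\mapsto\b\pi$ is a smooth bijection''. Convergence gives a well-defined map, but one must still verify smoothness and local invertibility; the cleanest route is to check that at the fixed point the jacobian of the full map has full rank, which should reduce, via the partitioned-inverse identities already used in Lemma 2, to the nonsingularity of $\b Q_{\cgl I\cgl H\mid\cgl R}$ together with positive definiteness of $\b F(M)$ from Lemma 3. I expect the bookkeeping of which blocks of $\b F(M)$ enter $\b A$, $\b B$ and the derivative with respect to $\b\xi$ to be the part that needs care, while the qualitative trichotomy itself follows immediately once Lemma \ref{le:spectr} is in hand.
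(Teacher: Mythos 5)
Your proposal is correct and follows essentially the same route as the paper: contraction of the LM step via the spectral radius bound of Lemma \ref{le:spectr}, a diffeomorphism argument at the fixed point (the paper cites Theorem 1 of \citet{BaCoFo:07} where you invoke the mixed parameterization directly, and your implicit-function-theorem remark on $\b I-\b J$ is a slightly more explicit version of the same step), and the observation that $\b J=\b I$ makes every starting value a fixed point, so distinct $\b\pi$'s share the same $\b\xi$. No substantive difference in approach or gap to report.
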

\begin{proof}
Consider the sequence of vectors produced by the LM algorithm: $\b\eta_{\cgl H;M}^{(0)}, \b\eta_{\cgl H;M}^{(1)}, \dots $, where $\b\eta_{\cgl H;M}^{(0)}$ is the starting value and $\b\eta_{\cgl H;M}^{(s)}$ is the output of one step of the LM algorithm when we use $\b\eta_{\cgl H;M}^{(s-1)}$ as input; because we have assumed that there is at least a compatible solution inside the parameter space, \cite[][Theorem 1.1]{Agarw:2001} implies that, if the spectral radius of $\b J$ is strictly less than one, the sequence converges to a unique solution. At convergence the argument in \cite[Theorem 1]{BaCoFo:07} can be applied to show that the mapping is a diffeomorphism. In the special case when the jacobian matrix $\b J$ is an identity matrix, $\b\eta_{\cgl H;M}^{(0)}$ = $\b\eta_{\cgl H;M}^{(1)}$, so the algorithm converges in one step, irrespective of the starting value. This implies that, if $\b\pi^{(0)}$ is the probability vector corresponding to $\b\eta_{\cgl H;M}^{(0)}$ there is a whole neighbourhood of $\b\pi^{(0)}$ whose points share exactly the same vector $\b\xi$ of mean and log-linear parameters.
\end{proof}
\begin{remark}
According to Theorem \ref{th:conv}, a model may be smooth even if the log-linear interactions in $\cg I$ are defined and constrained in two different marginals. This is apparently in conflict with the result of \citep[Theorem 3]{BerRud:02} which says that the jacobian obtained by differentiating the same log-linear interaction $I$ defined in two different marginals, say $M_1,\:M_2$, with respect to $\b p$, is singular for the uniform distribution, a condition which is necessary (but not sufficient) for a model to have singularities. However, when the set $M\bks I$ is not empty, the log-linear interactions defined by \citet{BerRud:02} are constructed by averaging conditional interactions across all possible configurations of the conditioning variables. As mentioned in section 2.1, the results of Lemma \ref{BRCF}  in the Appendix imply that any constraint on one of their log-linear interactions
is equivalent to a linear constraint on the whole ascending class of our interactions with minimal element $I$ and maximal element $M$. Hence the LM algorithm is not directly applicable to interactions defined in that way.
\end{remark}
\subsection{Convergence of the algorithm}
Below we derive a more convenient expression for $\b Q_{\cgl I\cgl H\mid\cgl R}$ and show that the matrix is non singular under complete independence, if the set $\cg H$ satisfies certain conditions. We also determine conditions under which $\b Q_{\cgl I\cgl H\mid\cgl R}$ is singular or null. Finally, we discuss the singularity of the same matrix when $\b p(M)$ is unrestricted.

Let $\b P_{\emptyset} =\b 1 \b\pi\tr$ be the projector, according to the metric defined by the matrix $\b D_{\bl\pi}$, on the space spanned by the vector $\b 1$. By simple algebra, it can be shown that:
\begin{eqnarray*}
\b F_{\cgl I\cgl H} &=& \b G_{\cgl I}\tr \b \Omega \b G_{\cg H} =  \b G_{\cgl I}\tr(\b I - \b P_{\emptyset})\tr \b D_{\bl\pi} (\b
I - \b P_{\emptyset})\b G_{\cgl H}.
\end{eqnarray*}
From the previous result, it follows that:
$$
\b Q_{\cgl I \cgl H \mid \cgl R} = \b G_{\cgl I}\tr \b D_{\bl\pi} (\b I -
\b P_{\cgl{\ovl R}})(\b I - \b P_{\emptyset})\b G_{\cgl H},
$$
where
$$\b P_{\cgl{\ovl R}}=(\b I - \b P_{\emptyset})\b G_{\cgl R}\b F_{\cgl R\cgl R}^{-1}
\b G_{\cgl R}\tr
(\b I - \b P_{\emptyset})\tr \b D_{\bl\pi}$$
is the projector, according to the metric defined by the matrix $\b D_{\bl\pi}$, on the space spanned by the columns of $\cg (\b I-\b P_\emptyset)\b G_{\cgl R}$.

Let $\cg S(\b X)$ denote the space spanned by the columns of $\b X$, for every $a\subseteq M$ let also $\b X_{a}= \bigotimes_{j \in
M} \b X_j$, where  $\b X_j =\b I_j$ if $j \in a$ and $\b X_j =\b 1_j$  otherwise, and let $\b P_a$ = $\b X_{a}(\b X_a\tr \b D_{\bl\pi}\b X_{a})^{-1}\b X_a\tr \b D_{\bl\pi}$ be the projection matrix onto $\cg S(\b X_a)$. Let $\b X_{\cgl I \cup \cgl R}$ be the matrix made by the columns of $\b X_a\: \forall a\in \cg I\cup \cg R$
and $\b P_{\cgl I \cup \cgl R}$ the projection onto $\cg S(\b X_{\cgl I \cup \cgl R})$.
Because $\cg S(\b G_{\cgl R})$ and $\cg S(\b 1)$ belong to $\cg S(\b X_{\cgl I \cup \cgl R})$ the projection matrix $\b P_{\cgl I \cup \cgl R}$ commutes with both $\b P_{\cgl{\ovl R}}$ and $\b P_{\emptyset}$; in addition, by using the identity  $\b P_{\cgl I \cup \cgl R}\b G_{\cgl I}=\b G_{\cgl I}$ it follows that
\begin{equation}
\b Q_{\cgl I \cgl H \mid \cgl R} = \b G_{\cgl I}\tr \b D_{\bl\pi} (\b I -
\b P_{\cgl{\ovl R}})(\b I - \b P_{\emptyset})\b P_{\cgl I \cup \cgl R}\b G_{\cgl H}.
\label{eq:Qform}
\end{equation}
\subsubsection{The case of complete independence}
\begin{lemma}\label{le:noH}
Under complete independence of the variables in $M$, (i) if $\cg H$ contains an interaction $v\not\in\cg A$, the corresponding columns in $\b Q_{\cgl I \cgl H \mid \cgl R}$ are null, (ii)
if $\cg H$ contains an interaction $v$ where at least one of the variables in $v$ is not binary and not contained in any element of $\cg V$, $\b Q_{\cgl I \cgl H \mid \cgl R}$ has a block of columns which is not of full rank.
\end{lemma}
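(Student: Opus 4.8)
The plan is to exploit the tensor-product structure of $\b X_a$, $\b G$, and (crucially) of $\b D_{\bl\pi}$ under complete independence, so that every projector appearing in (\ref{eq:Qform}) factorizes as a Kronecker product over the variables in $M$. Under complete independence $\b\pi=\bigotimes_{j\in M}\b\pi_j$, hence $\b D_{\bl\pi}=\bigotimes_j\b D_{\bl{\pi_j}}$, and consequently $\b P_a=\bigotimes_j\b P^{(j)}$ where $\b P^{(j)}$ is the $\b D_{\bl{\pi_j}}$-orthogonal projector onto $\cg S(\b 1_j)$ when $j\notin a$ and the identity when $j\in a$. In particular $\b P_\emptyset=\bigotimes_j\b P^{(j)}_{\b 1}$, and each $\b I-\b P^{(j)}_{\b 1}$ is the $\b D_{\bl{\pi_j}}$-orthogonal projector onto the $r_j$-dimensional ``interaction'' subspace. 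The same factorization applies to $\b P_{\cgl{\ovl R}}$ since $\cg S((\b I-\b P_\emptyset)\b G_{\cgl R})=\bigoplus_{a\in\cgl R}\bigotimes_j W^{(j)}_a$ with $W^{(j)}_a$ equal to the interaction subspace if $j\in a$ and to $\cg S(\b 1_j)$ otherwise — again a sum of Kronecker-structured pieces.

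First I would rewrite a generic column of $\b Q_{\cgl I \cgl H\mid\cgl R}$ indexed by an interaction $v\in\cg H$: using $\b P_{\cgl I\cup\cgl R}\b G_{\cgl I}=\b G_{\cgl I}$ and the factorization above, the relevant column equals $\b G_{\cgl I}\tr\b D_{\bl\pi}(\b I-\b P_{\cgl{\ovl R}})(\b I-\b P_\emptyset)\,\b e_v$ where $\b e_v$ is the corresponding column of $\b P_{\cgl I\cup\cgl R}\b G_{\cgl H}$, which lives in $\bigotimes_j V^{(j)}_v$ with $V^{(j)}_v$ the interaction subspace if $j\in v$ and $\cg S(\b 1_j)$ otherwise. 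For part (i), suppose $v\notin\cg A$. Then at least one of the index sets distinguishing $v$ from every element of $\cg I\cup\cg R$ forces a factor: because $v\notin\cg A$ and $\cg I\subseteq\cg V\cap\cg A$, the interaction $v$ contains a variable, or a combination of variables, that appears in no element of $\cg I$; tracking that factor through the Kronecker product, the component of $\b G_{\cgl I}$ in that slot is $\b 1_j$ (since $j\notin$ any element of $\cg I$ in that direction), while the component of $\b e_v$ is either an interaction vector (orthogonal to $\b 1_j$ in the $\b D_{\bl{\pi_j}}$ metric) or is killed by $(\b I-\b P_\emptyset)$. Either way that slot contributes a factor of the form $\b 1_j\tr\b D_{\bl{\pi_j}}(\,\cdot\,)=0$, so the whole column vanishes. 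The one point needing care is that $\b P_{\cgl{\ovl R}}$ could in principle reintroduce mass in that slot; but since $\b P_{\cgl{\ovl R}}$ factorizes and in that slot acts either as the identity or as the projector onto $\cg S(\b 1_j)$, and $(\b I-\b P_\emptyset)\b e_v$ already has zero $\b 1_j$-component there, the column still annihilates against $\b G_{\cgl I}$.

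For part (ii), take $v\in\cg H$ with some variable $k\in v$ that is non-binary ($r_k\ge 2$) and belongs to no element of $\cg V$. Non-binariness means the interaction block for $v$ has several columns (one per value $x_k>0$), so it suffices to produce a non-trivial linear dependence among these columns in $\b Q_{\cgl I\cgl H\mid\cgl R}$. Because $k$ is in no element of $\cg V$, hence in no element of $\cg R$ and no element of $\cg I$, the $k$-slot of $\b G_{\cgl I}$, of $\b G_{\cgl R}$, of $\b 1$ is always $\b 1_k$: the only place the $k$-direction enters with an interaction vector is through $\b G_{\cgl H}$ itself. In every slot the ``left'' side $\b G_{\cgl I}\tr\b D_{\bl\pi}(\b I-\b P_{\cgl{\ovl R}})(\b I-\b P_\emptyset)$ acts as $\b 1_k\tr\b D_{\bl{\pi_k}}$ on the $k$-factor, which is a single row; thus all $r_k$ columns of the $v$-block are scalar multiples of one another (indeed all equal $\b 1_k\tr\b D_{\bl{\pi_k}}\b g^{(k)}_{x_k}$ times a common vector, and $\b 1_k\tr\b D_{\bl{\pi_k}}\b g^{(k)}_{x_k}=\pi_{k}(x_k)\ne0$), so the block has rank at most one and, since $r_k\ge2$, is rank-deficient. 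Hence $\b Q_{\cgl I\cgl H\mid\cgl R}$ cannot be of full column rank.

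The main obstacle I anticipate is bookkeeping the Kronecker slots cleanly: one must verify that $\b P_{\cgl I\cup\cgl R}$ genuinely commutes with $\b P_{\cgl{\ovl R}}$ and $\b P_\emptyset$ slot-by-slot and that the identity $\b P_{\cgl I\cup\cgl R}\b G_{\cgl I}=\b G_{\cgl I}$ survives the reduction, so that replacing $\b G_{\cgl H}$ by $\b P_{\cgl I\cup\cgl R}\b G_{\cgl H}$ in (\ref{eq:Qform}) really does localize each column of $\cg H$ to the Kronecker subspace $\bigotimes_j V^{(j)}_v$. Once that localization is secure, both (i) and (ii) reduce to the observation that a Kronecker product of vectors/operators is zero (respectively rank-one) as soon as a single factor is zero (respectively rank-one), which is immediate.
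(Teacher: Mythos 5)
Your part (ii) is essentially the paper's own argument and works, up to a small overstatement: when $v$ contains variables besides $k$ the $v$-block has $\prod_{j\in v}r_j$ columns and rank at most $\prod_{j\in v\setminus k}r_j$, not one; what your computation actually establishes, and what suffices, is that the $r_k$ columns sharing a configuration of the remaining variables of $v$ are proportional with ratios $\pi_k(x_k)$, hence the block is rank deficient.

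Part (i) has a genuine gap. The combinatorial claim it rests on --- that $v\notin\cg A$ forces $v$ to contain a variable appearing in no element of $\cg I$ --- is false in general: in the setting of Example~\ref{ex:6}, $\cg I=\{12,13,123,134\}$ and $\cg L$ contains $v=24$, which is not in $\cg A$ (it omits variable $1$) although both of its variables occur in elements of $\cg I$. Your hedge ``or a combination of variables'' points at the right object (the subset $24$ is contained in no element of $\cg I$), but your subsequent step is a single-slot annihilation and does not apply to a combination. Moreover, even when such a variable $j$ exists the slot argument does not close: a column of $\b G_v$ carries $\b e_{jl}$ in slot $j$, which is \emph{not} orthogonal to $\b 1_j$ in the metric of the marginal of $X_j$ (their inner product is $\pi_j(l)\neq 0$); splitting $\b e_{jl}$ into $\pi_j(l)\b 1_j$ plus a centred part kills only the centred part and leaves $\pi_j(l)$ times the column indexed by $v\setminus j$ --- which is precisely the mechanism behind part (ii), not a proof of nullity. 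Relatedly, your assertion that $\b P_{\cgl I\cup\cgl R}\b G_v$ lives in a Kronecker product of interaction subspaces over the slots of $v$ is incorrect. The paper's route is different and is genuinely needed here: because $\cg V$ is closed under subsets, $\cg A$ is an ascending class and $\cg I\subseteq\cg A$, every intersection of $v$ with an element of $\cg I\cup\cg R$ lies in $\cg R$; expanding $\b P_{\cgl I\cup\cgl R}\b G_v$ by inclusion--exclusion over the maximal sets as in the proof of Theorem~\ref{th:ind}, every term has column space inside $\cg S(\b G_s)$ for some $s\in\cg R\cup\{\emptyset\}$, so the whole of $\b P_{\cgl I\cup\cgl R}\b G_v$ is annihilated by $(\b I-\b P_{\cgl{\ovl R}})(\b I-\b P_{\emptyset})$ in (\ref{eq:Qform}). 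You would need to supply this argument, or an equivalent descending-class decomposition, to repair part (i).
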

\begin{proof}
{See the Appendix}
\end{proof}
Lemma \ref{le:noH} suggests two necessary conditions for $\b Q_{\cgl I \cgl H \mid \cgl R}$ to be non singular: $\cg H$ cannot contain interactions not in $\cg A$ and all the non binary variables involved in the class of interactions $\cg H$ and not present in the class $\cg I\cup\cg R$, must be fixed to a single category different from the reference category. This implies that only a limited number of higher order interactions in $M$ can be used as a replacement for those in $\cg I$.
The definition below provides a set of conditions for $\cg H$ which will be shown to be sufficient.
Let $\cg K = \{m_1,\dots , m_r\}$ be the family of the maximal sets of $\cg I\cup\cg R$; for $t \in \cg I$, let $K(t)=\{m: m \in \cg K, t \subseteq m\}$ be the family of sets $m\in \cg K$ that contain $t$, $\cg K(t,h)$ be the family of the sets $\cg G$,  $\cg G \in \cg P[\cg K(t)] $, such that $h\cap \bigcap_{m_j\in\cgl G} m_j = \emptyset$ and $\bar{\cg K}(t,h)=\cg P(\cg K)\bks\cg K(t,h)$.
\begin{definition} \label{conf}
A set $\cg H$ is a {\it valid replacement} for a given $\cg I$ if it satisfies the following conditions:
\begin{itemize}
\item[(i)] there is a one to one correspondence between the elements of $\cg I$ and $\cg H$ such that, for each $t\in \cg I$, there is a $v=t \cup h \in\cg H$, $t \cap h= \emptyset$, where the variables in $h$ are fixed to a given category different from the reference category;
\item[(ii)] $\sum_{\cgl G\in \cgl K(t,h)}\: (-1)^{\mid \cgl G\mid} \ne 0$;
\item[(iii)] there exists a complete ordering "$\prec$" in $\cg I$, coherent with the partial ordering  of set inclusion, such that, for every $t \cup h \in \cg H$, $\cg G\in \bar{\cg K}(t,h)$ and $s=\left(\bigcap_{m_j\in \cgl G}m_j \right) \cap (t\cup h)$  either $s \in \cg R$, or $s\in \cg I$ and  $s \prec t$.
\end{itemize}
\end{definition}
To clarify these notions, we discuss a few examples where we write $(t,h)$ as a shorthand for $t \cup h$ if $t \cup h \in \cg H$.
\begin{example}
In example \ref{ex:1} with $\cg I=\{12,123\}$ and $\cg H=\{(12,4),(123,4)\}$, all conditions are trivially satisfied with $\cg K=\{123\}$; this is also the only element of $\cg K(t,h),\:\forall t,h$, the same happens in example \ref{ex:2}. In example \ref{ex:6} with the ordered set  $\cg I=\{12,13, 123,134\}$ and $\cg H=\{(12,5),(13,5),(123,5),(134,5)\}$ condition (i) is clearly satisfied. In this case we have: $\cg K=\{123,134\}$,  $\cg K(134,5)=\{134\}$, $\cg K(123,5)=\{123\}$, $\cg K(13,5)=\{123,134,\{123,134\}\}$ and $\cg K(12,5)=\{123\}$ and condition (ii) is satisfied by these sets.
For $\cg{\bar K}(134,5)=\{123,\{123,134\}\}$ condition (iii) is satisfied with $s=13$. The set $\cg{\bar K}(123,5)=\{134,\{123,134\}\}$ satisfies (iii) because s=13. In the case of $\cg{\bar K}(12,5)=\{134,\{123,134\}\}$ (iii) holds because $s=1$. The family  $\cg{\bar K}(13,5)$ is empty and so in this case condition (iii) is void.
\end{example}

\begin{example}
\label{ex:8}
Having assumed $1\ci 2\mid (3,4),\: 1\ci 2\mid (3,5),\:1\ci 2\mid (3,6)$, we also want $1\ci 2\mid (4,5,6)$. It can be verified that $\cg H=\{(12,56),(124,56)\}$ is a valid replacement for $\cg I=\{12,124\}$; here $\cg K=\{124,125,126\}$ and $\cg K(124,56)=\{124\}$. It is easy to see that (i) and (ii) are satisfied. Condition (iii) holds in the case of  $\bar{\cg K}(124,56)$ = $\{125,126,\{124,125\}, \{124,126\},\{125,126\}, \{124, 125,126\}\}$, because apart from the first two elements which produce sets $s$  that belong to $\cg R$, all the others produce  $s=12$. A similar remark holds in that case of $\cg K(124,56)$. However, if we set $\cg I=\{12,124,125,126\}$, though $\cg H=\{(12,56),(124,56),$ $(125,4),(126,4)\}$ satisfies conditions (i) and (ii), (iii) does not hold.
\end{example}
We now give an instance where condition (ii) is not satisfied.
\begin{example}
\label{ex:14}
Suppose that $1\ci 2\mid (3,4)$, $1\ci 2\mid (3,5)$ and finally $1\ci 2\mid (3,4,5,6)$. Though the best choice would be to set $\cg I=\{12,123,124,125,1234,1235\}$, if we set $\cg I=\{12,123\}$ and $\cg H=\{(12,46),(123,46)\}$, condition (ii) is not satisfied. 
\end{example}
\begin{lemma} \label{le:ex}
A pair $\cg I$, $\cg H$, where $\cg H$ is a valid replacement, always exists; for instance, take $\cg I =\{t\}$, where $t$ is one of the minimal elements of $\cg A$, and $\cg H = \{t \cup h\}$, where $h$ contains all the variables that belong to at most one element of $\cg K(t)$ when $\cg K(t)$ is not a singleton and by the variables that do not belong to the unique element of $\cg K(t)$ otherwise.
\end{lemma}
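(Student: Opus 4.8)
The plan is to take the explicit pair $\cg I=\{t\}$, $\cg H=\{v\}$ with $v=t\cup h$ proposed in the statement and verify, one at a time, the three conditions of Definition \ref{conf}. Throughout I would use the fact that $\cg I\cup\cg R=\cg V$ is closed under taking subsets: it is the set of interactions defined in the previously processed margins that lie in $\cg P(M)$, hence a union of power sets intersected with $\cg P(M)$. Consequently $\cg K$ is exactly the family of maximal elements of $\cg V$, every member of $K(t)$ contains $t$, and, crucially, any intersection $\bigcap_{m_j\in\cg G}m_j$ of members of $\cg K$ again lies in $\cg V$.

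\emph{Condition (i).} By construction $v\supseteq t$. I would first check $t\cap h=\emptyset$: if $K(t)=\{m\}$ is a singleton, $h$ consists of variables outside $m$ while $t\subseteq m$; if $K(t)$ is not a singleton, each variable of $t$ lies in all --- hence in more than one --- of the members of $K(t)$ and is therefore excluded from $h$. Next, since $t$ is a minimal element of the up-set $\cg A$, $v=t\cup h\in\cg A$; and one checks that $v$ is not contained in any maximal set of $\cg V$ (this is where the precise recipe for $h$, in particular its non-emptiness, is used), so that $v\notin\cg V$, i.e.\ $v\in\cg L$ and $\cg H\subseteq\cg L$ as required. This also makes the construction consistent with the necessary conditions of Lemma \ref{le:noH}: $v\in\cg A$, and all of the ``extra'' variables, those of $h$, are fixed to a non-reference category.

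\emph{Condition (ii).} This is where the shape of $h$ really enters, and I expect it to be the \textbf{main obstacle}. Put $c=\bigcap_{m\in K(t)}m$ and $S_m=m\bks c$; one first notes that $h$ is disjoint from $c$ and contained in $\bigcup_{m\in K(t)}S_m$, so that for non-empty $\cg G\subseteq K(t)$ the condition $h\cap\bigcap_{m_j\in\cg G}m_j=\emptyset$ is equivalent to $h\cap\bigcap_{m_j\in\cg G}S_{m_j}=\emptyset$. Hence $\cg K(t,h)$ is an up-set in the Boolean lattice $\cg P[K(t)]$, and the task is to pin it down from the definition of $h$ and show that $\sum_{\cg G\in\cg K(t,h)}(-1)^{|\cg G|}\neq0$. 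When $K(t)$ is a singleton this is immediate: $\cg K(t,h)=\{K(t)\}$ and the alternating sum is $-1$. When $K(t)$ is not a singleton one must identify the minimal members of $\cg K(t,h)$ and carry out the alternating-sum computation; the delicate point is that it cannot vanish, and I expect this to need both that the members of $K(t)$ form an antichain (being maximal in $\cg V$) and the explicit choice of $h$, not merely its disjointness from $c$.

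\emph{Condition (iii).} Since $\cg I=\{t\}$ is a singleton, any complete ordering ``$\prec$'' on $\cg I$ is vacuously coherent with set inclusion and the option ``$s\in\cg I$ and $s\prec t$'' never arises, so it suffices to show that for every $\cg G\in\bar{\cg K}(t,h)$ the set $s=\big(\bigcap_{m_j\in\cg G}m_j\big)\cap(t\cup h)$ lies in $\cg R=\cg V\bks\{t\}$. That $s\in\cg V$ follows from $s\subseteq\bigcap_{m_j\in\cg G}m_j\subseteq m$ for any $m\in\cg G$ together with the subset-closure of $\cg V$, so it remains to exclude $s=t$. I would split into two cases: if $\cg G\subseteq K(t)$, then $\bigcap_{m_j\in\cg G}m_j\supseteq t$, while $\cg G\notin\cg K(t,h)$ forces $h\cap\bigcap_{m_j\in\cg G}m_j\neq\emptyset$, so $s$ contains $t$ together with a variable of $h$ and hence $s\neq t$; if instead $\cg G$ contains some $m$ with $t\not\subseteq m$, then $t\not\subseteq\bigcap_{m_j\in\cg G}m_j$, so $t\not\subseteq s$ and again $s\neq t$. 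In both cases $s\in\cg R$, which completes the verification of Definition \ref{conf} and hence the proof.
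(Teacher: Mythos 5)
Your verifications of conditions (i) and (iii) are sound; in fact your treatment of (iii) — deducing $s\in\cg V$ from subset-closure of $\cg V$ and then ruling out $s=t$ by the two-case split on whether $\cg G\subseteq \cg K(t)$ — is more explicit than the paper's, which simply declares (iii) trivial because $\cg I$ is a singleton. The genuine gap is exactly where you flag it: condition (ii) in the non-singleton case is announced as the main obstacle and then left unresolved, and it is the whole content of the lemma. The paper closes it with a short combinatorial argument that your setup circles around but never lands on. By the definition of $h$ (the variables belonging to at most one element of $\cg K(t)$), any variable lying in the intersection of two or more elements of $\cg K(t)$ lies in at least two of them and is therefore excluded from $h$; hence every $\cg G\subseteq\cg K(t)$ with $|\cg G|\ge 2$ satisfies $h\cap\bigcap_{m_j\in\cg G}m_j=\emptyset$ and belongs to $\cg K(t,h)$, while the singletons do not. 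So $\cg K(t,h)$ is identified outright as the family of subsets of $\cg K(t)$ of cardinality at least two, and with $n_t=|\cg K(t)|\ge 2$ the alternating sum is
$$
\sum_{\cg G\in\cg K(t,h)}(-1)^{|\cg G|+1}=\sum_{i=2}^{n_t}\binom{n_t}{i}(-1)^{i+1}=-\sum_{i=0}^{1}\binom{n_t}{i}(-1)^{i+1}=1-n_t\neq 0.
$$
No appeal to the up-set structure of $\cg K(t,h)$, to its minimal members, or to the antichain property of $\cg K(t)$ is needed — only this direct identification plus the binomial identity. Without that computation your proposal does not establish the lemma; the auxiliary observations you make in the (ii) paragraph (disjointness from $c=\bigcap_{m\in\cg K(t)}m$, reduction to the sets $S_m$) are correct but do not by themselves pin down $\cg K(t,h)$ or show the sum is nonzero.
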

\begin{proof} {  See the Appendix }
\end{proof}
\begin{example}
In example \ref{ex:6},  the minimal element  $t$ of $\cg A$ can be 12 or 13. If $t=12$ then $\cg K(t)= \{123\}$ is a singleton. In this case h=45 and $\cg H$ contains only $(12,45)$, the family $\cg K(t,h)$ contains only $\{123\}$. Instead, if we set $t=13$, $\cg K(t)= \{123, 134\}$ and we must set $h=45$, thus $\cg K(t,h)$ contains only the set $\cg G=\{123, 134\}$.
In example \ref{ex:14}, $\cg K =\{1234,1235\}$, with $t=12$ we must set $h=456$ and $\{1234,1235\}$ is the only set in $K(t,h)$.
In example \ref{ex:8} with $t=12$, $\cg K(t)=\{1234,1235,1236\}$, thus we must set $h=456$; here $\cg K(t,h)$ has 3 elements of size 2 and 1 element of size 3 and the sum in condition (ii) of Definition 1 is -2.
\end{example}

Let $\b G_{t,h}(\b j_h)$ be the sub-matrix of $\b G_{t\cup h}$ where variables in $h$ are fixed to $\b j_h$.
\begin{lemma}
\label{le:prod}
Under complete independence:\\
a) $\cg S(\b P_a\b G_{t,h}(\b j_h))\subseteq \cg S(G_r)$, where $r=a\cap (t\cup h)$,\\
b) if $t\subseteq a$ and $a\cap h=\emptyset$,  $\b P_a\b G_{t,h}(\b j_h)$ = $\b G_t { P(\b x_h=\b j_h)}$.
\label{le:tabel}
\end{lemma}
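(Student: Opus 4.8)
The plan is to reduce both claims to a one-variable computation by exploiting the Kronecker-product structure that complete independence forces on all the matrices involved. Under complete independence $\b\pi=\bigotimes_{j\in M}\b\pi_j$, where $\b\pi_j$ is the vector of marginal probabilities of $X_j$; put $\b D_j=\diag(\b\pi_j)$, so that $\b D_{\bl\pi}=\bigotimes_{j\in M}\b D_j$, and let $\b E_j$ be the identity of order $r_j+1$ with its first column removed. By construction $\b X_a=\bigotimes_{j\in M}\b X_j$, with $\b X_j=\b I_{r_j+1}$ for $j\in a$ and $\b X_j=\b 1_{r_j+1}$ otherwise; $\b G_r$ is the Kronecker product over $j\in M$ of $\b E_j$ when $j\in r$ and $\b 1_{r_j+1}$ when $j\notin r$, and likewise for $\b G_t$ and $\b G_{t\cup h}$; and, writing $\b j_h=(\ell_k)_{k\in h}$ with each $\ell_k\ge 1$, one has $\b G_{t,h}(\b j_h)=\bigotimes_{j\in M}\b Y_j$, where $\b Y_j=\b E_j$ if $j\in t$, $\b Y_j=\b e_{\ell_j}$ (the $(r_j+1)$-vector with $1$ in the position indexed by $\ell_j$ and $0$ elsewhere) if $j\in h$, and $\b Y_j=\b 1_{r_j+1}$ if $j\notin t\cup h$.

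Next I would establish the factorization $\b P_a=\bigotimes_{j\in M}\b P_j$, with $\b P_j=\b I_{r_j+1}$ for $j\in a$ and $\b P_j=\b 1_{r_j+1}\b 1_{r_j+1}\tr\b D_j$, the $\b\pi_j$-weighted averaging operator, for $j\notin a$. This is immediate from the mixed-product rule: $\b X_a\tr\b D_{\bl\pi}\b X_a=\bigotimes_{j\in M}(\b X_j\tr\b D_j\b X_j)$, each factor being invertible whenever $\b D_{\bl\pi}$ is ($\b D_j$ when $j\in a$, the scalar $\b 1\tr\b D_j\b 1=1$ when $j\notin a$), so that its inverse is the Kronecker product of the inverse factors; multiplying through then gives the stated $\b P_j$. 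Hence $\b P_a\b G_{t,h}(\b j_h)=\bigotimes_{j\in M}\b P_j\b Y_j$, and it remains to evaluate each block: if $j\in a$ then $\b P_j\b Y_j=\b Y_j$; if $j\notin a$ and $j\in h$ then $\b P_j\b Y_j=P(X_j=\ell_j)\,\b 1_{r_j+1}$; if $j\notin a$ and $j\in t$ then $\b P_j\b Y_j$ is a rank-one matrix with column space $\cg S(\b 1_{r_j+1})$; and if $j\notin t\cup h$ then $\b P_j\b Y_j=\b 1_{r_j+1}$ in both of the remaining subcases.

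For part (a), set $r=a\cap(t\cup h)$ and compare $\b P_j\b Y_j$ with the $j$-th Kronecker factor of $\b G_r$. If $j\in r$ that factor is $\b E_j$, whose column space (all vectors whose reference-category coordinate vanishes) contains both $\b E_j$ itself (case $j\in t\cap a$) and $\b e_{\ell_j}$ (case $j\in h\cap a$, since $\ell_j\ge 1$). If $j\notin r$ that factor is $\b 1_{r_j+1}$, and the list above gives $\cg S(\b P_j\b Y_j)\subseteq\cg S(\b 1_{r_j+1})$ in every remaining situation. Since $\cg S(\b A_j)\subseteq\cg S(\b B_j)$ for all $j$ forces $\cg S(\bigotimes_{j}\b A_j)\subseteq\cg S(\bigotimes_{j}\b B_j)$ (write $\b A_j=\b B_j\b C_j$ and apply the mixed-product rule), claim (a) follows. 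For part (b), the hypotheses $t\subseteq a$ and $a\cap h=\emptyset$ pin down the case for each $j$: for $j\in t$ one has $j\in a$ and $\b P_j\b Y_j=\b E_j$, the $j$-th factor of $\b G_t$; for $j\in h$ one has $j\notin a$ and $\b P_j\b Y_j=P(X_j=\ell_j)\,\b 1_{r_j+1}$; for $j\notin t\cup h$ one gets $\b 1_{r_j+1}$, again the $j$-th factor of $\b G_t$. Factoring the scalars out of the Kronecker product yields $\b P_a\b G_{t,h}(\b j_h)=\bigl(\prod_{j\in h}P(X_j=\ell_j)\bigr)\b G_t$, and under complete independence $\prod_{j\in h}P(X_j=\ell_j)=P(\b x_h=\b j_h)$, which is claim (b).

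I expect the only genuine work to be the bookkeeping in the four-way case split and making the factorization $\b P_a=\bigotimes_{j}\b P_j$ fully rigorous — in particular, verifying that $\b X_a\tr\b D_{\bl\pi}\b X_a$ splits as a Kronecker product of invertible factors so that the mixed-product rule applies to its inverse. Once that factorization is available, both statements fall out of the per-variable identities with no further calculation.
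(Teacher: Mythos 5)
Your proof is correct and follows essentially the same route as the paper: factor $\b P_a$, $\b G_{t,h}(\b j_h)$ and hence their product into per-variable Kronecker blocks under complete independence, then read off both claims from the case-by-case values of each block (the paper records exactly these values in its Table 1). Your version merely spells out in more detail the step the paper leaves implicit, namely that $\b P_a=\bigotimes_j \b P_a(j)$ with $\b P_a(j)=\b 1_j\b\pi_j\tr$ for $j\notin a$.
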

\begin{proof}
{See the Appendix.}
\end{proof}
\begin{theorem}
\label{th:ind}
If $\cg H$ is a valid replacement for $\cg I$, under complete independence, $\b Q_{\cgl I \cgl H \mid \cgl R}$ is non singular.
\end{theorem}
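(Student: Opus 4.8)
The plan is to work with the expression~(\ref{eq:Qform}) for $\b Q_{\cgl I\cgl H\mid\cgl R}$ and to exploit the product structure of all the relevant matrices under complete independence. Under complete independence $\b D_{\bl\pi}$ factorizes as a Kronecker product over the $d$ coordinates, and by Lemma~\ref{le:prod} the action of each projector $\b P_a$ on a column block $\b G_{t,h}(\b j_h)$ is governed entirely by the intersection $a\cap(t\cup h)$; in particular $\b P_a$ collapses $\b G_{t,h}(\b j_h)$ to a multiple of $\b G_{a\cap(t\cup h)}$. The first step is to decompose $\b P_{\cgl I\cup\cgl R}$, the projector onto $\cg S(\b X_{\cgl I\cup\cgl R})$, into an inclusion--exclusion combination of the elementary projectors $\b P_a$, $a\in\cg I\cup\cg R$ (using that, under the product metric, the $\b P_a$ for various $a$ form a commuting family whose joins and meets behave like a distributive lattice, so the projector onto the span of the union is $\sum (-1)^{\cdots}\b P_{\bigcap\cdots}$ over subsets of the maximal sets $\cg K$). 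Applying this to a column block of $\b G_{\cgl H}$ indexed by $v=t\cup h$ and invoking Lemma~\ref{le:prod}(a)--(b) turns $\b P_{\cgl I\cup\cgl R}\b G_{t,h}(\b j_h)$ into a linear combination $\sum_{\cgl G}(-1)^{|\cgl G|}c_{\cgl G}\,\b G_{s(\cgl G)}$, where $s(\cgl G)=\big(\bigcap_{m_j\in\cgl G}m_j\big)\cap(t\cup h)$ and $c_{\cgl G}$ is a product of marginal probabilities.

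The second step is to understand what survives after we further apply $(\b I-\b P_{\cgl{\ovl R}})(\b I-\b P_\emptyset)$ and then left-multiply by $\b G_{\cgl I}\tr\b D_{\bl\pi}$. The operator $\b I-\b P_{\cgl{\ovl R}}$ kills everything in $\cg S\big((\b I-\b P_\emptyset)\b G_{\cgl R}\big)$, and $\b I-\b P_\emptyset$ removes the $\b 1$-component; so among the terms $\b G_{s(\cgl G)}$ above, those with $s(\cgl G)\in\cg R$ (or $s(\cgl G)=\emptyset$) drop out. This is precisely where Definition~\ref{conf}(iii) enters: it guarantees that every $\cg G\in\bar{\cg K}(t,h)$ produces an $s$ that is either in $\cg R$ — hence annihilated — or in $\cg I$ with $s\prec t$. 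The surviving contribution to the column block indexed by $v=t\cup h$ therefore consists of the ``diagonal'' term coming from $\cg G\in\cg K(t,h)$ (where $h\cap\bigcap m_j=\emptyset$, so $s=t$ and Lemma~\ref{le:prod}(b) gives exactly $\b G_t$ times a probability factor), together with strictly lower-order terms $\b G_s$ with $s\in\cg I$, $s\prec t$. Collecting the coefficient of the diagonal term: summing the $(-1)^{|\cgl G|}$ over $\cg G\in\cg K(t,h)$ gives the quantity $\sum_{\cgl G\in\cgl K(t,h)}(-1)^{|\cgl G|}$, which is nonzero by Definition~\ref{conf}(ii), and this is multiplied by a strictly positive probability $P(\b x_h=\b j_h)$ and by the nonsingular Gram factor $\b G_{\cgl I}\tr\b D_{\bl\pi}\b G_{\cgl I}$-type block.

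The third step assembles these observations into a triangularity argument. Order the columns of $\b Q_{\cgl I\cgl H\mid\cgl R}$ by the complete ordering $\prec$ on $\cg I$ (lifting it to $\cg H$ via the bijection $t\leftrightarrow t\cup h$ of Definition~\ref{conf}(i)), and order the rows compatibly by $t\in\cg I$. By the previous paragraph the $(t,t)$ block of $\b Q_{\cgl I\cgl H\mid\cgl R}$ is a nonzero scalar multiple of $\b G_{\cgl I}\tr\b D_{\bl\pi}(\b I-\b P_\emptyset)\b G_t$ restricted to the $t$-rows — i.e.\ a nonsingular square block — while all off-diagonal contributions involve only $\b G_s$ with $s\prec t$, so they live in rows indexed by sets $\preceq s\prec t$; hence $\b Q_{\cgl I\cgl H\mid\cgl R}$ is block lower-triangular with nonsingular diagonal blocks, and therefore nonsingular. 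The main obstacle I anticipate is the bookkeeping in the second step: making rigorous the claim that, after the inclusion--exclusion expansion of $\b P_{\cgl I\cup\cgl R}$, the terms regroup exactly into one ``diagonal'' piece indexed by $\cg K(t,h)$ plus a remainder indexed by $\bar{\cg K}(t,h)$ whose every member is handled by condition (iii) — in particular checking that no cancellation destroys the diagonal coefficient (this is exactly what (ii) rules out) and that the probability factors $c_{\cgl G}$ do not conspire to vanish. Everything else is linear algebra built on Lemmas~\ref{le:noH} and~\ref{le:prod}.
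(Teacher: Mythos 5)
Your proposal follows the paper's proof essentially step for step: the same inclusion--exclusion expansion of $\b P_{\cgl I\cup\cgl R}$ over $\cg P(\cg K)$, the same split of the sum into $\cg K(t,h)$ versus $\bar{\cg K}(t,h)$, Lemma \ref{le:prod}(b) plus condition (ii) to extract a nonzero multiple of $\b G_t$, and Lemma \ref{le:prod}(a) plus condition (iii) to send every remaining term either into $\cg S(\b G_{\cgl R})$, where it is annihilated by $\b I-\b P_{\cgl{\ovl R}}$, or onto some $\b G_s$ with $s\in\cg I$, $s\prec t$.

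However, the final assembly is wrong as you state it. What your analysis actually shows is that the column block of $(\b I-\b P_{\cgl{\ovl R}})(\b I-\b P_\emptyset)\b P_{\cgl I\cup\cgl R}\b G_{\cgl H}$ indexed by $t\cup h$ equals $(\b I-\b P_{\cgl{\ovl R}})(\b I-\b P_\emptyset)\bigl(k\,\b G_t+\sum_{s\prec t}\b G_s\b A_{s,t}\bigr)$ with $k\neq 0$. After left-multiplying by $\b G_{\cgl I}\tr\b D_{\bl\pi}$, this is a linear combination of \emph{whole columns} of $\b Q_{\cgl I\cgl I\mid\cgl R}$ (those indexed by $s\preceq t$), and the row block of such a combination indexed by any $u\in\cg I$ with $u\succ t$ involves the conditional covariances between the binary variables for $u$ and those for the various $s\preceq t$, which have no reason to vanish. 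So $\b Q_{\cgl I\cgl H\mid\cgl R}$ is \emph{not} block lower-triangular; the triangularity sits in the coefficient matrix, not in $\b Q$ itself. The correct conclusion from your own computation is the factorization $\b Q_{\cgl I\cgl H\mid\cgl R}=\b Q_{\cgl I\cgl I\mid\cgl R}\b A$, where $\b A$ is block lower-triangular in the ordering $\prec$ with diagonal blocks proportional to identities by condition (ii), hence nonsingular. You then still need the separate observation that $\b Q_{\cgl I\cgl I\mid\cgl R}=\b F_{\cgl I\cgl I}-\b F_{\cgl I\cgl R}\b F_{\cgl R\cgl R}^{-1}\b F_{\cgl R\cgl I}$ is a Schur complement of the positive definite matrix $\b F$ and therefore nonsingular. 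With that repair your argument coincides with the paper's.
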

\begin{proof}
Because under complete independence the projectors $\b P_a, \: a \subseteq M$ commute, we can write $\b P_{\cgl I \cup \cgl R}$ = $\sum_{\cgl G \in \cgl P(\cgl K)}(-1)^{1+\mid \cgl G\mid }\prod_{m \in \cg G}\b P_m$,  it follows that:
$$
\b P_{\cgl I \cup \cgl R}\b G_{t,h}(\b j_h) = \sum_{\cgl G \in \cgl  K(t,h)}(-1)^{ 1+\mid \cgl G\mid} \prod_{m \in \cgl G}\b P_m \b G_{t,h}(\b j_h) + \sum_{\cgl G \in \bar{\cgl K}(t,h)}(-1)^{1+\mid\cgl G\mid} \prod_{m \in \cgl G}\b P_m \b G_{t,h}(\b j_h).
$$
Condition (ii) of definition \ref{conf} and b) in lemma \ref{le:tabel} imply that the first sum is $k\b G_t$, with  $k \neq 0$. Condition (iii) of definition \ref{conf} and a) in lemma \ref{le:tabel} imply that,
when an element in the second sum, say $\b U$, is such that $\cg S(\b U)\subseteq \cg S(\b G_{\cgl R})$, when we left multiply by $(\b I - \b P_{\cgl{\bar R}})(\b I-\b P_\emptyset)$, we get a null matrix because $(\b I - \b P_{\cgl{\bar R}})$ projects onto the space orthogonal to $(\b I-\b P_\emptyset)\b G_{\cgl R}$. { In all other cases there exists a non null matrix  $\b A_{s,t}$, $s\prec t$}, such that:
$$
(\b I - \b P_{\cgl R})(-1)^{1+\mid\cgl G\mid} \prod_{m \in \cgl G}\b P_m \b G_{t,h}(\b j_h)=(\b I - \b P_{\cgl R}) \b G_s \b A_{s,t}.
$$
The matrix $\b G_{\cgl H}$ is made of blocks of columns of the form $\b G_{t,h}(\b j_h)$ and we may assume,{ without loss of generality,  that these blocks are in the same order as the elements of $\cg I$ specified }in condition (iii), then it follows that
$$
\b Q_{\cgl I \cgl H \mid \cgl R} = \b Q_{\cgl I \cgl I \mid \cgl R} \b A,
$$
where the matrix $\b A$ has blocks $\b A_{s,t}, \quad s,t \in \cg I$ such that $\b A_{s,t}=\b 0$ if $t \prec s$ and,
because of condition (ii) of definition \ref{conf}, the diagonal blocks $\b A_{t,t}$, are proportional to an identity matrix, thus $\b A$ is lower triangular and non singular. The result follows because both matrices in the product above are non singular.
\end{proof}
\begin{example}
The choice of $\cg I,\:\cg H$ in the second part of example \ref{ex:8} does not satisfies (ii) still, numerical simulations indicate that $\b Q_{\cgl I \cgl H \mid \cgl R}$ is non singular. This  exemplifies  that the conditions of being adequate for replacement are only sufficient.
\end{example}
\begin{remark}
Theorem 6 of \citet{RoLuLa} implies that, in the binary case, the model defined by $a\ci b$ and $a\ci b\mid c$, with all the elements of $c$ equal 0, is smooth. If we set $\cg I$ = $\cg P(a\cup b)\setminus(\cg P(a)\cup \cg P(b))$ and $\cg H$ = $\{t,h: t\in \cg I,\: h=c(\b j_c)\}$ with $\b j_c=\b 1$, our Theorem \ref{th:ind} implies that, under independence, the model $a\ci b$ and $a\ci b\mid c$, except when all the elements of $c$ are equal 1, is smooth.
\end{remark}
\subsubsection{The general case}
Unfortunately, the main arguments used above depend crucially on the assumption of complete independence. For discrete data, all the models which have been shown to be non smooth, have a singular locus which is a subset of that defined by complete independence; instead, for gaussian models \citep[Example 4.4]{DrtXiao} indicates that a non smooth model may have a singular locus not contained in the model of complete independence. { It is also interesting to note that numerical evaluations of $\b Q_{\cgl I \cgl H \mid \cgl R}$ outside the space of complete independence, indicate that it is of full rank even if $\cg H$ does not satisfies the conditions of Definition 3.}

Taking into account all of the above, the extensive simulations which we have performed seem to support the conjecture that all the models obtained by replacing the interactions in $\cg I$ with an adequate replacement in $\cg H$ are indeed smooth everywhere in the parameter space. Though very unlikely, we cannot rule out the possibility that the models above may be singular on points of the parameter space which are outside the subspace defined by the model of complete independence.
However, the expression for $\b Q_{\cgl I \cgl H \mid \cgl R}$ is very easy to compute and the software in {\sc MatLab} and R which we provide as supplementary material, may be used for quick numerical checks.
\section{Context specific conditional independence models}
We have seen that a non smooth model may be transformed into a smooth one by omitting a class of log-linear interactions $\cg H$ to make place for other interactions defined in a previous marginal which we want to redefine in $M$. This implies that the values of the interactions in $\cg H$ are uniquely determined by the probabilities reconstructed in previous marginals and the log-linear interactions defined in $M$, thus they cannot be constrained.  Because $\cg H\subseteq \cg A$, certain constraints implied by the original conditional independence in $M$ cannot be implemented; in addition, when $\cg A\cap\cg R \neq \emptyset$, further limitations must be taken into account. In this section we study the nature and scope of the actual constraints that can be imposed in $M$.
We remind that a conditional independence statement, that holds only on a subset of the configurations of the conditioning variables, is a context specific conditional independence (\citet{Hojs:2004}).

The collection of interactions $\cg J=\cg H\cup (\cg A\cap\cg R)$ belongs to $\cg A$ but cannot be constrained in $M$; these interactions are of two kinds: (i) those which belong to $\cg H$ have to be omitted as a replacement for duplicating those in $\cg I$ and (ii)  those in $\cg R$ which we were unable to replicate because, if included in $\cg I$, there would not exist an $\cg H$ adequate for replacement. It follows that the conditional independence  in $M$ must be restricted to the context that does not require to constrain the collection  of  interactions $ \cg J$. More precisely, point (i) implies that the conditional independence can be defined only in the contexts in which, for every $(t,h) \in \cg H$ the variables in $h$ are different from $\b j_h$. Point (ii) implies that the conditional independence can be defined only in the contexts where, for every maximal set $m$ of $\cg I$ and $ v \in \cg A\cap\cg R$ the variables belonging to $v \setminus m$ are fixed to the reference category.
Let us examine some of the previous example to clarify the situation.
\begin{example}
Consider again example \ref{ex:6} here $\cg A\cap\cg R$ is empty and $$
\cg H = \{(12,5),(123,5),(13,5),(134,5)\};
$$
it follows that we are left with $1\ci (2,3)\mid (4,5)$, for all $X_5\ne j_5$. In example \ref{ex:5},  because $\cg J$ = $\{124,125,1234,1235,(12,45),(123,45)\}$, with $\cg{ I}$ = $\{12,123\}$,  we can have $1\ci 2\mid (3,4,5)$ where $X_4,X_5$ are fixed to the reference category, a statement of much more limited scope than the original one. Finally, in example \ref{ex:8}, though $\cg J$ = $\{125,126\}$ is smaller than before, with $\cg{I}$ = $\{12,124\}$, we can only impose $1\ci 2\mid (4,5,6)$ with $X_5,X_6$ fixed to the reference category.
\end{example}

As mentioned above, the conditionally independence in $M$ would not be restricted if the elements of $\cg H$ did not belong to $\cg A$, however, Theorem \ref{th:conv} implies that the resulting model is non smooth.
\begin{example}
In example \ref{ex:1} suppose that all variables have the same number of categories; because $\cg A$ = $\{12,123,124,1234\}$, the conditional independence is not affected if we take $\cg H$ = $\{23,234\}$; though this corresponds to the same number of parameters as $\cg I$, the jacobian $\b J$ of the LM algorithm is the identity matrix and the model is non smooth.
\end{example}
\subsection*{Acknowledgments}
We are grateful to M. Drton for useful discussions.
\section*{Appendix}
\subsection*{Interactions defined as contrasts of averages of logarithms of probabilities.}
An alternative to the {\it reference category} interactions $\eta_{I;M}(\b x_I)$
are the interactions based on contrasts of averages which may be defined as
\begin{equation}
\bar\eta_{I;M}(\b x_I \mid \b x_{M\setminus I})=\sum_{b\subseteq I}\frac{1}{\langle I \setminus b\rangle} (-1)^{|I\setminus b|}\sum_{\b x_{I \setminus b}}\log(\pi(\b x_b,\b x_{I\setminus b},\b x_{M\setminus I})),
\label{eq:anova}
\end{equation}
where $\langle I \rangle$ denotes the number of possible configurations of the vector $\b x_I$.

When $M \setminus I$ is not empty, the interactions defined in  (\ref{eq:anova})
depend on the value of the remaining variables and may be interpreted as the log-linear interaction $I$ in the marginal
distribution $M$ conditionally on $X_h=x_h\:\forall h\in \bs M I$.
To use the interactions defined in (\ref{eq:anova}) as a parameterization, the usual way of removing redundancies is to average with respect the conditioning variables, leading to the following expression
\begin{eqnarray*}
\bar\eta_{I;M}(\b x_I) &=& \frac{1}{\langle M \setminus I\rangle}\sum_{\bl x_{M\bksl I}}\bar\eta_{I;M}(\b x_I \mid \b x_{M\setminus I})\\
&=& \frac{1}{\langle M \setminus I\rangle}\sum_{\bl x_{M\bksl I}}\sum_{b\subseteq I}\frac{1}{\langle I \setminus b\rangle} (-1)^{| I\setminus b|}\sum_{\b x_{I \setminus b}}\log(\pi(\b x_b,\b x_{I\setminus b},\b x_{M\setminus I}))\\
&=& \frac{1}{\langle M \setminus b\rangle}\sum_{\bl x_{M\bksl b}}\sum_{b\subseteq I} (-1)^{| I\setminus b|}\log(\pi(\b x_b,\b x_{M\setminus b})).
\label{BRmed}
\end{eqnarray*}

It is well known that both  the {\it contrasts of averages} interactions $\bar\eta_{a;M}(\b x_a)$, used by \citet{BerRud:02}, and  the {\it reference category} interactions $\eta_{a;M}(\b x_a)=\eta_{a;M}(\b x_a\mid \b 0_{\bsl M a})$, used in this paper, are a parametrization of the joint probabilities.

Let
$\bar{\b\eta}(I,M;\b 0_{M\setminus I})$ denote the vector of log-linear interactions in (\ref{eq:anova}) when the variables in $\b x_I$ take all possible configurations in lexicographic order. It is easy to verify that we may write  $\bar{\b\eta}(I,M;\b 0_{M\setminus I})$ = $\b S(I,M)\log\b p(M)$, where $\b S(I,M)$ = $\bigotimes_{i\in M} \b S_i$ and $\b S_i$ is equal to the matrix $\b I-\b 1\b 1\tr/(r_i+1)$ without the first row if $i\in I$ and to the vector $(1\quad , \b 0\tr)$ otherwise. It is also easy to verify that the vector of log-linear interactions $\bar{\b\eta}(I,M)$, obtained by averaging over all possible configurations of the conditioning variables, may be written as $\bar {\b\eta}(I,M)$ = $\bar {\b  S}(I,M)\log \b p(M)$, where $\bar { \b S}(I,M)$ = $\bigotimes_{i\in M} \bar {\b S_i}$ and  $\bar{\b S_i}$ is equal to $\b S_i$ when $i\in I$ and to the vector $\b 1\tr/(r_i+1)$ otherwise.
\begin{lemma}\label{BRCF}
\begin{eqnarray}
\bar{\b\eta}(I,M;\b 0_{M\setminus I}) &=& \b A \b\eta(I,M)\label{eq:cond} \\
\bar{\b\eta}(I,M) &=& \b B \b\eta(\cg I,M) \label{eq:av}
\end{eqnarray}
for suitable matrices of constants $\b A$ and $\b B$.
\end{lemma}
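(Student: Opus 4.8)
The plan is to use the explicit formulas derived immediately before the statement, by which all the vectors involved are linear images of $\log\b p(M)$ through Kronecker‑structured matrices: $\bar{\b\eta}(I,M;\b 0_{M\setminus I}) = \b S(I,M)\log\b p(M)$ with $\b S(I,M)=\bigotimes_{i\in M}\b S_i$, $\bar{\b\eta}(I,M)=\bar{\b S}(I,M)\log\b p(M)$ with $\bar{\b S}(I,M)=\bigotimes_{i\in M}\bar{\b S}_i$, and $\b\eta(H,M)=\b C(H,M)\log\b p(M)$ with $\b C(H,M)=\bigotimes_{i\in M}\b C_i$, for every $H$ with $I\subseteq H\subseteq M$. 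Hence it suffices to establish the corresponding inclusions of \emph{row spaces}, and these can be checked one Kronecker factor at a time: if $\b S_i=\b A_i\b C_i$ for matrices with matching column dimensions, then $\bigotimes_{i\in M}\b S_i=\bigl(\bigotimes_{i\in M}\b A_i\bigr)\bigl(\bigotimes_{i\in M}\b C_i\bigr)$ by the mixed‑product rule $(\b A\ot\b B)(\b C\ot\b D)=\b A\b C\ot\b B\b D$.

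For (\ref{eq:cond}): fix $i\in M$. If $i\notin I$ then $\b S_i=(1,\b 0\tr)=\b C_i$, so take $\b A_i=1$. If $i\in I$ then $\b C_i=(-\b 1_{r_i}\ \b I_{r_i})$ satisfies $\b C_i\b 1_{r_i+1}=\b 0$ and has full row rank $r_i$, so its row space is exactly the orthogonal complement of $\b 1_{r_i+1}$; the rows of $\b S_i$ are rows of the projector $\b I_{r_i+1}-\b 1\b 1\tr/(r_i+1)$ onto that same complement and therefore lie in the row space of $\b C_i$, whence $\b S_i=\b A_i\b C_i$ for a (necessarily invertible) $r_i\times r_i$ matrix $\b A_i$. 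Setting $\b A=\bigotimes_{i\in M}\b A_i$ and using the mixed‑product rule gives $\b S(I,M)=\b A\,\b C(I,M)$, hence (\ref{eq:cond}).

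For (\ref{eq:av}) the only new ingredient is the factors $i\notin I$, where now $\bar{\b S}_i=\b 1_{r_i+1}\tr/(r_i+1)$. Writing $\b C_i^{(\mathrm{in})}=(-\b 1_{r_i}\ \b I_{r_i})$ and $\b C_i^{(\mathrm{out})}=(1,\b 0\tr)$ for the two possible Kronecker factors at position $i$, a direct check gives the decomposition $\bar{\b S}_i=\tfrac{1}{r_i+1}\b 1_{r_i}\tr\b C_i^{(\mathrm{in})}+\b C_i^{(\mathrm{out})}$; that is, the averaging row splits into a piece in the row space of the ``$i\in H$'' factor plus a piece in the row space of the ``$i\notin H$'' factor, while for $i\in I$ we keep $\bar{\b S}_i=\b S_i=\b A_i\b C_i^{(\mathrm{in})}$ from the previous step. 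Substituting these into $\bar{\b S}(I,M)=\bigotimes_{i\in M}\bar{\b S}_i$ and expanding by multilinearity of the Kronecker product distributes the product over all subsets $K\subseteq M\setminus I$; each resulting term is, by the mixed‑product rule, of the form $\b B_K\,\b C(I\cup K,M)$ for an explicit constant matrix $\b B_K$. Post‑multiplying by $\log\b p(M)$ and summing yields $\bar{\b\eta}(I,M)=\sum_{K\subseteq M\setminus I}\b B_K\,\b\eta(I\cup K,M)$, which is exactly $\bar{\b\eta}(I,M)=\b B\,\b\eta(\cg I,M)$ once the $\b\eta(H,M)$, $I\subseteq H\subseteq M$, are stacked into $\b\eta(\cg I,M)$ and the $\b B_K$ into $\b B$.

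The computations are all elementary. The only points requiring care are the explicit split $\bar{\b S}_i=\tfrac{1}{r_i+1}\b 1_{r_i}\tr\b C_i^{(\mathrm{in})}+\b C_i^{(\mathrm{out})}$ for $i\notin I$ and the Kronecker bookkeeping, which must be organized so that the terms in the expansion are indexed precisely by the ascending class $\{H:\,I\subseteq H\subseteq M\}$ and by nothing else; beyond that, nothing is needed except the mixed‑product rule and the identification of the row space of $\b C_i$ with $\b 1_{r_i+1}^{\perp}$.
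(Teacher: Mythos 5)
Your proof is correct, and the key identities check out: for $i\in I$ the rows of $\b S_i$ and of $\b C_i$ both span $\b 1_{r_i+1}^{\perp}$, so $\b S_i=\b A_i\b C_i$ with $\b A_i$ invertible, and the split $\bar{\b S}_i=\tfrac{1}{r_i+1}\b 1_{r_i}\tr\b C_i^{(\mathrm{in})}+\b C_i^{(\mathrm{out})}$ is verified by direct computation, after which the multilinear expansion indexes the terms exactly by the ascending class $\{I\cup K: K\subseteq M\setminus I\}$. The route differs from the paper's in a genuine way: the paper substitutes the reconstruction formula $\log\b p(M)=\b G(M)\b\eta(M)-\b 1\log\{\cdot\}$ and then shows, factor by factor, that $\b S(I,M)\b G(J,M)$ (respectively $\bar{\b S}(I,M)\b G(J,M)$) vanishes unless $J=I$ (respectively $J\supseteq I$), i.e. it works with the right inverse $\b G(M)$ of $\b C(M)$ and annihilation of blocks; you instead decompose the row matrices $\b S(I,M)$ and $\bar{\b S}(I,M)$ directly as linear combinations of the $\b C(H,M)$ via row-space inclusions and the mixed-product rule, never invoking the reconstruction formula. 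Your version is more self-contained and has the advantage of producing the matrices $\b A$ and $\b B=(\b B_K)$ explicitly (and of making the invertibility of $\b A$ immediate), at the cost of the extra bookkeeping in the expansion of $\bigotimes_i\bar{\b S}_i$; the paper's version is shorter because equation (\ref{reconstruct}) and the identity $\b C(M)\b G(M)=\b I$ are already available. Both are valid proofs of the lemma.
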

\begin{proof}
By substitution in (\ref{reconstruct}),  $\bar{\b\eta}(I,M;\b 0_{M\setminus I})$ = $\b S(I,M)\b G(M) \b\eta(M)$ and (\ref{eq:cond}) follows by noting that the kronecker product contains a 0 factor if there is an $i\in I,\: i\not\in J$, because $\b S_i$ is a matrix of row contrasts and $\b G_i$ is the unitary vector; the same result arise if there is a $i\not\in I,\: i\in J$, because $\b S_i$ is the vector $(1\quad , 0\tr)$ and $\b G_i$ is the $\bar{\b I_i}$ matrix whose first row is a row of 0's. Equation(\ref{eq:av}) follows by a similar argument: when $i\in I,\:i\not\in J$, $\bar{ \b S_i}$ = $\b S_i$ and we get a 0 factor as above, instead, when $i\not \in I,\:i\in J$, $\bar{ \b S_i}$ is proportional to the unitary vector so that $\bar{ \b S_i} \b G_i$ is also proportional to a unitary vector.
\end{proof}
By noting that any category may be chosen as reference category for each variable, (\ref{eq:cond}) implies that any log-linear interaction $I$ defined in (\ref{eq:anova}) is a linear function of the log-linear interactions $I$ defined in (\ref{eq:mlli}) for all possible values of $\b x_I$. Instead, the log-linear interactions $\bar{\b\eta}(I,M)$, obtained by averaging across the conditioning variables, are a linear function of all $\b\eta(J,M)$, $I\subseteq J\subseteq M$.
\subsection*{Proofs of the Lemmas}
\begin{proof}
{\bf Proof of Lemma \ref{le:spectr}}.
Note that $\b A$ is the residual variance in a linear model where the binary variables indexed by $\cg H$ are regressed on the variables in $\cg R$ while $\b B$ is the residual variance when $\cg H$ is regressed on the variables in $\cg R$ and $\cg I$. Then, properties of linear projections imply that we may write $\b C$ = $\b A-\b B$ where
$$\b C = \b Q_{\cgl I\cgl H\mid\cgl R}\tr \left( \b F_{\cgl I\cgl
I}-\b F_{\cgl I\cgl R}\b F_{\cgl R\cgl R}^{-1}\b F_{\cgl R \cgl
I}\right)^{-1} \b Q_{\cgl I\cgl H\mid\cgl R}
$$
is clearly a positive semi-definite matrix. Then Theorem 7.7.3 in \citet{HornJ:2009} implies that the spectral radius of $\b B\b A^{-1}$, which is equal to that of $\b A^{-1}\b B$, is always not greater than 1. The spectral radius is exactly 1 if and only if $\b C$ or. equivalently,  $\b Q_{\cgl I\cgl H\mid\cgl R}$ is singular.
\end{proof}
\begin{proof}
{\bf Proof of Lemma \ref{le:noH}}. (i) { Suppose there is a $v:\:v\in \cg H,\:v\not\in \cg A$ and consider the intersections of $v$ with the elements of $\cg I\cup\cg R$; these must belong to $\cg V$ and cannot be contained in $\cg A$, hence} they must belong to $\cg R$; the argument in the proof of Theorem  \ref{th:ind} implies that the corresponding columns in $\b Q_{\cgl I \cgl H \mid \cgl R}$ are 0. (ii) { Under independence $\b P_a\b G_v =\bigotimes_{j=1}^{d} \b\Pi_j$, where the factors $\b\Pi_j$ are the entries of the last row of Table 1, $a \in (\cg I\cup \cg R)$ and $v\in \cg H$. If there is a variable $j\in v$ which is not contained in any element of $\cg I\cup \cg R$, the last entry in the forth column of Table 1 indicates that there will be a factor $\b 1_j\bar{\b\pi}_j\tr$ where $\bar{\b\pi}_j\tr$ has $r_j$ columns; the result follows from an argument similar to the one at the beginning of the proof of Theorem \ref{th:ind}.}
\end{proof}
\begin{table}[h]
\begin{center}
\caption{}
\begin{tabular}{lcccccc}
 & \multicolumn{3}{c}{$j\in a$} & \multicolumn{3}{c}{$j\not\in a$} \\
 & $j\in t$ & $j\in h$ & $j\not\in (t\cup h)$ & $j\in t$ & $j\in h$ & $j\not\in (t\cup h)$ \\
$\b P_a(j)$ & $\b I_j$ & $\b I_j$ & $\b I_j$ & $\b 1_j\b\pi_j\tr$ & $\b 1_j\b\pi_j\tr$ & $\b 1_j\b\pi_j\tr$ \\
$\b G_{t,h}(j)$ & $\bar{\b I}_j$ & $\b e_{jl}$ & $\b 1_j$ & $\bar{\b I}_j$ & $\b e_{jl}$ & $\b 1_j$ \\
$\Pi_j$ & $\bar{\b I}_j$ & $\b e_{jl}$ & $\b 1_j$ & $\b 1_j\bar{\b\pi}_j\tr$ & $\b 1_j\pi_{jh}$ & $\b 1_j$
\end{tabular}
\end{center}
\end{table}
\begin{proof}
{\bf Proof of Lemma \ref{le:ex}}.
When $\cg K(t)$ is not a singleton, by construction, the intersection of two or more elements of $\cg K(t)$ is disjoint from $h$, thus $\cg K(t,h)$ is formed by sets $\cg G$ with cardinality not smaller than two. Let $n_t$ is the cardinality of $\cg K(t)$ then:
$$
{
\sum_{\cgl G \in \cgl K(t,h)}(-1)^{|\cgl G|+1}=\sum_{i=2}^{n_t}\left( \begin{array}{c} n_t \\ i \\ \end{array} \right)  (-1)^{i+1} = -\sum_{i=0}^1 \left( \begin{array}{c} n_t \\ i \\ \end{array} \right)  (-1)^{i+1}=-n_t+1,
}
$$
thus, point (ii) of Definition 3 is satisfied. Point (iii) is trivially satisfied because $\cg I$ is a singleton. When $\cg K(t)$ is a singleton all the conditions of Definition 1 are trivially satisfied.
\end{proof}
\begin{proof}
{\bf Proof of Lemma \ref{le:prod}}.
Under independence $\b P_a \b G_{t,h}$ = $\bigotimes_{j=1}^{d} \b\Pi_j$, where $\b\Pi_j$ = $\b P_a(j) \b G_{t,h}(j)$; the possible values of $\b\Pi_j$ are given in Table 1 where $\b{\bar I}_j$ is an identity matrix without the first column, $\b 1_j$ is a vector of ones $\b e_{jl}$ a vector of 0's except for a 1 in the $l$th position, and $\b\pi_j$ is the marginal distribution of $X_j$, all of dimension $r_j+1$.
Point a) follows from the first two columns of Table 1, while b) follows from columns 1 and 5.
\end{proof}

\bibliographystyle{elsarticle-harv}
\bibliography{bib}

\end{document}